\documentclass{article}

%%%%% Basic Setup Packages %%%%%
\usepackage[utf8]{inputenc}
\usepackage{geometry}%[margin=1in]{geometry}
\usepackage[english]{babel}

%%%%% Other Packages %%%%%
\usepackage{amsmath}
\usepackage{amsfonts}
\usepackage{amssymb}
\usepackage{amsthm}
\usepackage{mathrsfs}
\usepackage{tikz}
\usepackage{graphicx}
\usepackage{enumitem}
\usepackage{multicol}
\usepackage{verbatim}
\usepackage{multirow}
\usepackage{array}
\usepackage{tabularx}
\usepackage{colortbl}
\usepackage{caption}
\usepackage{subcaption}
\usepackage{soul}
\usepackage{mathtools}
\usepackage{dsfont}
%\usepackage{filecontents}
%\usepackage{eqnarray}
%\usepackage{hyperref}

%%%%% Table Commands %%%%%

%%%%% Environment Styles %%%%%
\newtheorem{theorem}{Theorem}

\newtheorem{lemma}[theorem]{Lemma}
\newtheorem{prop}[theorem]{Proposition}
\newtheorem{conjecture}[theorem]{Conjecture}
\newtheorem{claim}[theorem]{Claim}

\theoremstyle{definition}

\theoremstyle{remark}
\newtheorem*{remark}{Remark}
%\renewcommand{\proofname}{Solution}

%%%%% Shortcut Commands %%%%%

% Basic Commands

\newcommand{\N}{\mathbb{N}}
\newcommand{\Z}{\mathbb{Z}}

\newcommand{\C}{\mathbb{C}}
\newcommand{\F}{\mathbb{F}}

% Vector Commands 

% Analysis Commands

% Probability Commands
\def\P{{\hbox{\bf P}}}

% Number Theory Commands

% Specific Commands

\renewcommand{\mod}{\text{ (mod }}

\title{Avoiding Monochromatic Solutions 
to 3-term Equations}

\author{
Kevin P. Costello \\ kevin.costello@ucr.edu
\and 
Gabriel Elvin \\ gelvin@math.ucr.edu}

\date{University of California, Riverside \\[2ex] \today}

\begin{document}

\maketitle

\begin{abstract}
Given an equation, the integers $[n] = \{1, 2, \dots, n\}$
as inputs,
and the colors red and blue,
how can we color $[n]$  in order to minimize the 
number of monochromatic solutions to the equation,
and what is the minimum?
The answer is only known for a handful of equations,
but much progress has been made on improving upper
and lower bounds on minima for various equations. 
A well-studied characteristic an equation,
which has its roots in graph Ramsey theory,
is to determine if the minimum 
number of monochromatic solutions can be achieved
(asymptotically) by uniformly random colorings. 
Such equations are called \textit{common}.
We
prove that no 3-term equations are common and
provide a lower bound for a specific 
class of 3-term equations.
\end{abstract}

\section{Introduction}

Given an equation
\begin{equation}
a_1 x_1 + \cdots + a_k x_k = 0 \text{ with } a_i \in \Z
\end{equation}
and the colors red and blue,
how should we color the elements of
$[n] = \{1, 2, \dots, n\}$
in order to reduce the number 
of monochromatic solutions,
with the ultimate goal being to find the 
asymptotic (as $n \to \infty$) minimum number?
To be precise, by a coloring we mean 
a function $f : [n] \to \{-1, 1\}$
(where $-1$ represents blue and $1$ represents red),
by a solution we mean a vector $(x_1^*,\dots,x_k^*) \in [n]^k$
that satisfies the equation,
and by monochromatic we mean $f(x_1^*) = \cdots = f(x_k^*)$.

Before we proceed, we clarify the asymptotic 
notation used throughout.
Let $f, g$ be functions of $n$.
If $f = O(g)$, there exist some constants $C, N$
such that $|f(n)| \leq C g(n)$ for all $n \geq N$.
By $f = \Omega(g)$, we mean $g = O(f)$.
If $f = o(g)$, this indicates $|f|/ g \to 0$ as 
$n \to \infty$\footnote{In all cases, the implicit
constants are allowed to depend on the equation
being analyzed.}.

Asymptotic minima are difficult to come by,
but much progress has been made on improving 
upper and lower bounds.
The most comprehensive result on lower bounds
is due to Frankl, Graham, and R\"odl,
who showed that as long there is a nonempty subset
of coefficients which sum to 0,
the equation will always have $\Omega(n^{k-1})$ monochromatic solutions  \cite{FGR}.
In fact, their result is more general, considering systems of equations
and an arbitrary number of colors.

For upper bounds, a well-studied problem is to determine
if colorings can be found which yield fewer monochromatic
solutions asymptotically than uniformly random colorings.
This problem has its roots in graph Ramsey theory,
where one can ask a similar question:
given a fixed graph $H$,
can the edges of $K_n$ always be colored
in such a way that produces asymptotically fewer
monochromatic copies of $H$ in $K_n$ than
what would be expected from uniformly random colorings?
Graphs with this property are referred to 
as \textit{uncommon}.
In 1959, Goodman showed that $K_3$ was \textit{common},
i.e. every coloring of $K_n$ has asymptotically
at least as many monochromatic copies of $K_3$
as one would expect from uniformly random colorings
\cite{G}.
Three years later, Erd\H{o}s conjectured 
that $K_s$ was common for all $s \geq 2$ \cite{E},
and in 1980 Burr and Rosta were even bolder,
conjecturing that all graphs were common \cite{BR}.
However, in 1989 Thomason showed that
$K_4$ was uncommon, disproving both conjectures 
\cite{Thomason}.

The first result regarding equations
came nearly a decade later,
and we will highlight certain aspects
of the original equation of study:
$x + y = z$, known as \textit{Schur's equation}.
Each solution is generally represented as a \textit{Schur triple}
$(x, y, x + y)$. 
There are $\binom{n}{2}$
solutions over $[n]$
(when $(x,y,x + y)$ and $(y,x,x + y)$
are considered distinct).
With a uniformly random coloring, 
a given solution will be monochromatic
with probability 1/4, so we would expect
$n^2/8 + O(n)$ monochromatic solutions.
In 1998, 
Robertson and Zeilberger 
found the asymptotic minimum 
number of monochromatic solutions:
$n^2 / 11 + O(n)$ \cite{RZ}.
In particular,
there is always a coloring of $[n]$
with fewer
monochromatic solutions than what would be
expected from a uniformly random coloring.
To borrow the terminology from graph theory,
$x + y = z$ is \textit{uncommon}.
A coloring
that achieves this minimum is 
quite simple to describe:
\begin{center}
\begin{tikzpicture}
\draw[ultra thick, blue] (0,0) -- (4,0);
\draw[ultra thick, red] (4,0) -- (10, 0);
\draw[ultra thick, blue] (10,0) -- (11,0);
\draw[thick] (0,0.2) -- (0,-0.2);
\draw (0,-0.2) node[anchor = north]{1};
\draw[thick] (4,0.2) -- (4,-0.2);
\draw (4,-0.2) node[anchor = north]{$\frac{4n}{11}$};
\draw[thick] (10,0.2) -- (10,-0.2);
\draw (10,-0.2) node[anchor = north]{$\frac{10n}{11}$};
\draw[thick] (11,0.2) -- (11,-0.2);
\draw (11,-0.2) node[anchor = north]{$n$};
\end{tikzpicture}
\end{center}
(or as close to this as possible when $n$ is not a multiple of $11$).

One can ask the same questions about other equations
or systems of equations.
Generally the equations considered are 
linear with integer coefficients,
which enables another variation on the problem:
replace $[n]$ with an abelian group.
Colorings of $\Z_n$ and $\F_p^n$ are frequently studied
\cite{LP, FPZ, SW}.
Over $[n]$, true (asymptotic) minima are only known for 
a handful of equations,
such as $x + by = z$ with $b \in \N = \{1,2,3,\dots\}$
\cite{RZ, TW} and $x + y = z + w$,
where it turns out the $1/8$ fraction of monochromatic solutions 
from a random coloring is asymptotically optimal (see Appendix \ref{xyzw-app}
for proof).
Asymptotic minima are studied most often,
but there are also some results on \textit{exact}
minima \cite{KW}.

In this paper, we restrict our focus to 3-term equations
and address both upper and lower bounds.
For upper bounds, we show that all 3-term equations
are \textit{uncommon}, i.e. 
we can always color $[n]$
in such a way which produces 
asymptotically fewer monochromatic solutions
than what is expected from uniformly random colorings.
This result is of interest because 
all 3-term equations 
(in fact, all equations with an odd number of terms)
are actually \textit{common}
over any abelian group whose order is coprime
to each coefficient of the equation \cite{CCS}.
For lower bounds, we use a structure theorem
(a robust version of Freiman's $3k - 4$ Theorem \cite{SX})
to show equations of the form $ax + ay = cz$, $a,c \in \N$,
always have $\Omega(n^2)$ monochromatic solutions.

Even through the main focus in this paper is 3-term equations,
the notation below is kept more general
in order to make connections with the result in \cite{V},
which we will discuss later.
Let $E$ be an equation $a_1 x_1 + \cdots + a_k x_k = 0$ with integer
coefficients whose inputs are taken from a finite set $A$,
and let $f : A \to \{-1, 1\}$ a coloring.
Denote the set of all solutions
\begin{equation}
T_E(A)
\coloneqq \{(x_1,\dots,x_k) \in A^k \,|\, a_1 x_1 + \cdots + a_k x_k = 0\},
\end{equation}
and denote the set of all monochromatic solutions
\begin{equation}
M_E(f) \coloneqq
\{(x_1,\dots,x_k) \in T_E(A) 
	\,|\, f(x_1) = \cdots = f(x_k)\}.
\end{equation}
Next, the proportion of monochromatic solutions
under $f$ is denoted
\begin{equation}
\mu_E(f) \coloneqq \frac{|M_E(f)|}{|T_E(A)|}.
\end{equation}
Finally, the value in question is the minimum 
monochromatic proportion:
\begin{equation}\label{proportion-mc-eq}
\mu_E(A) \coloneqq \min_{f : A \to \{\pm 1\}}
\mu_E(f).
\end{equation}

\section{Upper Bounds}\label{upper_bounds}

With the previous notation,
an equation $E$ is \textbf{uncommon over $[n]$} if
\begin{equation}\label{eq_def_uncommon}
\limsup_{n \to \infty} \mu_E([n]) = \frac{1}{2^{k-1}} - \Omega(1)
\end{equation}
(asymptotically strictly less than $2^{1-k}$).
To reiterate, $2^{1-k}$ is the expected value of 
$\mu_E(f)$ when $f$ is a uniformly random coloring.
Note that what makes an equation uncommon over $[n]$ 
is a \textit{sequence} of colorings in $n$, 
but will often refer to the sequence simply
as a single coloring $f : [n] \to \{-1,1\}$
defined in terms of $n$.
With this, we can now state our main result formally.
\begin{theorem}\label{thm_main_result}
All equations $ax + by + cz = 0$ with $a,b,c \in \Z$
are uncommon over $[n]$.
\end{theorem}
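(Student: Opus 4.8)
The strategy is to pass to a limiting continuous optimization problem and then, for each equation, exhibit an explicit interval coloring that beats the uniform–random value $2^{1-k}=\tfrac14$. After dividing out $\gcd(a,b,c)$ and noting that the statement concerns genuine three-term equations (so $abc\ne 0$; and if $a,b,c$ all have the same sign there are no solutions in $[n]$ and nothing to prove), we may relabel the variables so that the equation reads $ax+by=cz$ with $a,b,c\in\N$. The number of solutions in $[n]$ is $\bigl(1+o(1)\bigr)\tfrac{n^2}{c}\,|D|$, where $D=\{(\xi,\eta)\in(0,1]^2 : a\xi+b\eta\le c\}$ and each $(\xi,\eta)\in D$ encodes the scaled solution $(\xi,\eta,\zeta)$ with $\zeta=(a\xi+b\eta)/c$. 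Given a measurable $\chi:(0,1]\to[0,1]$, the sequence of colorings of $[n]$ obtained by ``dithering'' $\chi$ at scale $o(n)$ has $\mu_E(f_n)\to \tfrac1{|D|}\int_D F\bigl(\chi(\xi),\chi(\eta),\chi(\zeta)\bigr)\,d\xi\,d\eta$, with $F(u,v,w)=uvw+(1-u)(1-v)(1-w)$, since all but an $o(1)$-fraction of solutions have their three coordinates at pairwise separated scales and hence ``independent'' colors. Thus it suffices to produce, for each equation, a single $\chi$ with $\tfrac1{|D|}\int_D F(\chi(\xi),\chi(\eta),\chi(\zeta))<\tfrac14$; one such $\chi$ already yields $\limsup_n\mu_E([n])\le \tfrac14-\Omega(1)$, the implied constant being allowed to depend on the equation.

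\textbf{The non-translation-invariant case $a+b\ne c$.} Here a threshold coloring $\chi=\mathbf 1_{(0,\alpha]}$ suffices for many equations: the ``all-red'' part of $D$ has measure $\alpha^2|D|$, while the ``all-blue'' part has measure $\bigl(1-\tfrac{a+b}{c}\alpha\bigr)^2|D|$ when $a+b>c$ (with an analogous piecewise-polynomial expression, including a quadratic corner correction, when $a+b<c$), so that $\mu(\chi)=\alpha^2+\bigl(1-\tfrac{a+b}{c}\alpha\bigr)^2$ and minimizing in $\alpha$ gives $\tfrac1{1+((a+b)/c)^2}$, which is $<\tfrac14$ as soon as $(a+b)/c>\sqrt 3$. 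For the finitely-describable remaining ratios one replaces the threshold coloring by a three-interval coloring $\mathbf 1_{(\alpha,\beta]}$ — the exact analogue of the $\tfrac{4n}{11},\tfrac{10n}{11}$ coloring for $x+y=z$ — and re-optimizes over $(\alpha,\beta)$. These are elementary bounded-degree piecewise-polynomial minimizations, and the essential point is that $a+b\ne c$ forces the optimum strictly below $\tfrac14$.

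\textbf{The translation-invariant case $a+b=c$ — the main obstacle.} Now $E$ is $ax+by=(a+b)z$, whose solutions are ``$(a,b)$-weighted three-term progressions'', the instance $a=b=1$ being ordinary three-term arithmetic progressions. The difficulty is that the obvious colorings fail: a direct expansion gives $F(\tfrac12+u,\tfrac12+v,\tfrac12+w)=\tfrac14+uv+vw+wu$, so $\chi\equiv\tfrac12$ is a \emph{local} minimum of the relaxed functional with value exactly $\tfrac14$, and in fact every symmetric interval coloring (two equal halves, the block pattern $\mathrm{RBR}$, the alternating pattern $\mathrm{RBRB}$, $\dots$) yields precisely $\tfrac14$; hence no perturbative argument can work. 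To get below $\tfrac14$ one must use a genuinely asymmetric, carefully balanced multi-interval coloring, with the interval lengths tuned so that the cross-interval progressions that would otherwise be monochromatic are suppressed — equivalently, a coloring of $[n]$ with fewer than $n^2/8$ monochromatic three-term progressions. Verifying that a given such coloring does the job reduces to a finite case analysis over pairs of intervals; pinning down the correct interval pattern (and checking the gap below $\tfrac14$ stays bounded away from $0$), possibly with the help of a structural input of the kind developed in \cite{V}, is where the real work lies and is the step I expect to be hardest.
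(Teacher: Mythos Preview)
Your proposal is a plan rather than a proof, and the gap is exactly where you flag it: the translation-invariant case $ax+by=(a+b)z$ is left open. You correctly observe that $\chi\equiv\tfrac12$ is a local minimum of the relaxed functional with value $\tfrac14$, and that naive symmetric interval patterns do not beat $\tfrac14$; but you never produce a coloring that does, nor any argument that one exists. This is not a technicality you can wave away---it is the heart of the theorem for this family, since over abelian groups these equations are genuinely \emph{common} (see \cite{CCS}), so the gain must come from the boundary structure of $[n]$.

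Your non-TI case is also incomplete. The threshold computation giving $\mu=\bigl(1+((a+b)/c)^2\bigr)^{-1}$ only disposes of equations with $(a+b)/c>\sqrt{3}$; the ``finitely-describable remaining ratios'' are in fact infinitely many equations (all $a,b,c$ with $a+b\ne c$ and $(a+b)/c\le\sqrt 3$), and you assert without proof that a three-interval coloring handles them uniformly. Even the threshold formula $\alpha^2+(1-(a+b)\alpha/c)^2$ is only exactly correct under additional shape assumptions on~$D$ (it relies on certain box constraints being non-binding), so the piecewise-polynomial optimization you allude to is more delicate than suggested.

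For comparison, the paper takes a completely different route that sidesteps interval colorings almost entirely. The key lemma is that $\limsup_n\mu_E([n])\le\mu_E(\Z_m)$ for every $m$, so one may construct colorings of $\Z_{|c|}$ instead. A short Fourier argument then shows that, under mild hypotheses on the coefficients (a unique largest coefficient, a coprimality condition, and $a+b\not\equiv 0\pmod{|c|}$), one can choose two or three nonzero Fourier coefficients to make the deviation from $\tfrac14$ strictly negative. Equations violating the coprimality condition are handled by a one-line observation (Proposition~\ref{prop_high_gcd}): if two coefficients share a factor $m\ge 3$, coloring $\Z_m$ with a single blue point gives $\mu=1/m^2<\tfrac14$. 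The translation-invariant family $ax+by=(a+b)z$---your hard case---is dispatched by citing \cite{BCG}, where it is shown to be uncommon as a constellation. Only a short explicit list of leftovers ($2x-2y+cz=0$ and $2x-y+2z=0$) requires ad hoc colorings, and for the first of these the paper uses an \emph{alternating} coloring on an initial segment, not an interval coloring, which should give you pause about whether pure interval patterns suffice in general.
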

Here we emphasize ``over $[n]$''
because of other results when $[n]$
is replaced by an abelian group
\cite{CCS, LP, SW, FPZ}.
To show many equations are uncommon over $[n]$,
we will color cyclic groups and extend them to 
colorings of $[n]$,
an idea that has some similarities
with techniques for solving 
related problems \cite{LP, BGL}.
We will actually define our colorings
via probability distributions and use 
Fourier-analytic techniques like those
in \cite{CCS, D, FPZ, SW, V}.
We do not make a distinction between the general cyclic group
of order $m$ and the integers modulo $m$, which 
we will denote $\Z_m = \{0, 1, \dots, m-1\}$.

Below is the crucial lemma that allows us to work
in $\Z_m$ rather than $[n]$. 
An analogous statement can be found in \cite{LP}
regarding arithmetic progressions.
While our results are centered around 3-term equations,
we state this fact more generally for use in a later discussion.
\begin{lemma}\label{lem_mod_ok}
Given an equation $E : a_1 x_1 + \cdots + a_k x_k = 0$
and a positive integer $m$,
\begin{equation}\label{mod-ok-eq}
\limsup_{n\to\infty}\mu_E([n]) \leq \mu_E(\Z_m).
\end{equation}
\end{lemma}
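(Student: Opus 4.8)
The plan is to build, for each $n$, an explicit coloring of $[n]$ by pulling back a (near-)optimal coloring of $\Z_m$ through reduction mod $m$, and to show that this pulled-back coloring already achieves monochromatic proportion tending to $\mu_E(\Z_m)$. Concretely, fix a coloring $g:\Z_m\to\{-1,1\}$ with $\mu_E(g)=\mu_E(\Z_m)$ and define $f_n:[n]\to\{-1,1\}$ by $f_n(i)=g(i\bmod m)$. Since $\mu_E([n])\le\mu_E(f_n)$ for every $n$, it suffices to prove $\mu_E(f_n)\to\mu_E(\Z_m)$, after which taking $\limsup$ finishes the lemma. We may assume the nonzero $a_i$ are not all of the same sign, since otherwise $E$ has no solution in positive integers, $T_E([n])=\varnothing$, and the inequality is trivial.

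The key structural observation is that solutions of $E$ over $[n]$ fiber over solutions of $E$ over $\Z_m$. The reduction map $\pi(x_1,\dots,x_k)=(x_1\bmod m,\dots,x_k\bmod m)$ sends $T_E([n])$ into $T_E(\Z_m)$, because $a_1x_1+\cdots+a_kx_k=0$ forces $a_1x_1+\cdots+a_kx_k\equiv 0\pmod m$; and a solution $x\in T_E([n])$ is $f_n$-monochromatic exactly when $\pi(x)\in M_E(g)$. Hence, writing $N(r):=|\pi^{-1}(r)\cap T_E([n])|$ for $r\in\Z_m^k$,
\[
|T_E([n])|=\sum_{r\in T_E(\Z_m)}N(r),\qquad |M_E(f_n)|=\sum_{r\in M_E(g)}N(r),
\]
so everything reduces to estimating the fiber sizes $N(r)$.

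To count $N(r)$, I would fix $r=(r_1,\dots,r_k)\in T_E(\Z_m)$, let $r_j'\in\{1,\dots,m\}$ be the representative of $r_j$, and write each admissible $x_j$ as $x_j=r_j'+mt_j$; then $x_j\in[n]$ forces $t_j$ to range over an interval of $n/m+O(1)$ consecutive integers, and $\sum_j a_jx_j=0$ becomes $\sum_j a_jt_j=c_r$ with $c_r=-\tfrac1m\sum_j a_jr_j'$ an integer satisfying $|c_r|\le\sum_j|a_j|$, a bound \emph{independent of $n$}. So $N(r)$ is the number of integer points of a box of side $\sim n/m$ lying on the affine hyperplane $\{\,t:\sum_j a_jt_j=c_r\,\}$. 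Rescaling the box to $[0,1]^k$ turns this hyperplane into $\sum_j a_ju_j=c_rm/n$, which tends to the central slice $\sum_j a_ju_j=0$ for \emph{every} $r$; standard estimates for lattice points in dilating polytopes then give $N(r)=\kappa\,(n/m)^{k-1}+O\big((n/m)^{k-2}\big)$ with an implied constant uniform in $r$, where $\kappa>0$ is a lattice-normalized $(k-1)$-volume of $\{u\in[0,1]^k:\sum_j a_ju_j=0\}$ depending only on $a_1,\dots,a_k$ (positivity of $\kappa$ is exactly the mixed-sign hypothesis). Substituting into the two displayed sums, and noting that $(0,\dots,0)\in M_E(g)\subseteq T_E(\Z_m)$ so both cardinalities are at least $1$, we get
\[
\mu_E(f_n)=\frac{|M_E(f_n)|}{|T_E([n])|}=\frac{|M_E(g)|\,\kappa\,(n/m)^{k-1}(1+o(1))}{|T_E(\Z_m)|\,\kappa\,(n/m)^{k-1}(1+o(1))}=\frac{|M_E(g)|}{|T_E(\Z_m)|}+o(1)=\mu_E(\Z_m)+o(1).
\]

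The one genuinely non-formal step is the uniform lattice-point estimate for $N(r)$: one must check that the number of integer points in a large box on a fixed rational hyperplane has the stated leading term with an error that does \emph{not} depend on which residue vector $r$ (equivalently, on the bounded shift $c_r$) is under consideration. What makes this go through is precisely that $c_r$ stays bounded while $n\to\infty$, so after rescaling every hyperplane $\sum_j a_jt_j=c_r$ is a vanishingly small perturbation of the central slice; a routine lattice-point-counting argument (boxes expanding along a fixed rational hyperplane) then yields the common leading constant $\kappa$ with a $O((n/m)^{k-2})$ error absorbing both the box boundary and this perturbation. Everything else is bookkeeping.
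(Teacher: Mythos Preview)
Your proof is correct and follows essentially the same approach as the paper: pull back an optimal coloring of $\Z_m$ to $[n]$ via reduction mod $m$, then show that every solution in $\Z_m$ lifts to asymptotically the same number of solutions in $[n]$, so the monochromatic proportion is preserved in the limit. You are more careful than the paper about justifying the uniform fiber count (via an explicit lattice-point argument and the mixed-sign hypothesis), whereas the paper simply asserts that each $\Z_m$-solution corresponds to $C(n/m)^{k-1}+O(n^{k-2})$ solutions over $[n]$ and invokes $|T_E(\Z_m)|=m^{k-1}$.
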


\begin{proof}
Let $f : \Z_m \to \{-1, 1\}$ be a coloring that achieves
the minimum on the right-hand side. This coloring can be extended
to a coloring $\tilde{f} : [n] \to \{-1, 1\}$ very naturally
by composing $f$ with the canonical projection map $[n] \to \Z_m$.
By design, a vector in $[n]^k$ is monochromatic if and only if
it is monochromatic when projected onto the corresponding vector in $\Z_m^k$. 
Let $Cn^{k-1} + O(n^{k-2})$ be the number of solutions to the equation over $[n]$,
where $C$ is some positive constant. 
Then each solution over $\Z_m$ corresponds to 
\begin{equation*}
C\left(\frac{n}{m}\right)^{k-1} + O(n^{k-2})
\end{equation*}
solutions over $[n]$.
Using the fact that $|T_E(\Z_m)| = m^{k-1}$,
we get
\begin{equation*}
\mu_E([n]) 
\leq \frac{|M_E(\tilde{f})|}{|T_E([n])|}
= \frac{\mu_E(\Z_m) m^{k-1}[C(n/m)^{k-1} + O(n^{k-2})]}{Cn^{k-1} + O(n^{k-2})}
= \mu_E(\Z_m) + o(1),
\end{equation*}
and the result follows.
\end{proof}
Lemma \ref{lem_mod_ok} is critical because it allows us to prove
results (and use past results) over $\Z_m$ and apply them to scenarios
over $[n]$.
In practice, the ``colorings'' we use are actually defined probabilistically,
and we invoke the probabilistic method to say that if there is a random
coloring whose expected proportion of monochromatic solutions is at most some value $K$,
then there must exist an actual coloring $f$ such that $\mu_E(f) \leq K$.

\begin{remark}
In the graph theoretic setting, the proportion
analogous to $\mu$,
\[\frac{\text{min. \# of monochr. $H$ in $K_n$}}{\text{total \# of $H$ in $K_n$}},\]
has a limit as $n \to \infty$ (often referred to as the
\textit{Ramsey multiplicity constant}).
The proof of this fact is straightforward,
as the sequence is bounded and monotonic.
However, for equations the corresponding sequence
is not monotonic.
We still expect the limit to exist, but a proof (or counterexample)
has not yet been found.
\end{remark}

We prove Theorem \ref{thm_main_result} in a series of steps,
each of which handles some subset of 3-term equations 
$ax + by + cz = 0$, $a, b, c \in \Z$.
First, we use
Fourier-analytic techniques and Lemma \ref{lem_mod_ok}
to deal with most equations. 
Next, we state and prove a modest proposition for nearly all the equations
not covered in the first step and again utilize Lemma \ref{lem_mod_ok}.
Finally, the equations which remain are a small and rigid class of equations
and one isolated equation, and for these we explicitly 
define colorings with asymptotically fewer than the critical
$1/4$ monochromatic fraction expected from uniformly random colorings.
Detailed computations for the equations in this step are provided in
Appendix \ref{app_computations}.
We always assume the equations are fully reduced,
i.e. $\gcd(a, b, c) = 1$.

\subsection{Fourier-analytic Techniques}\label{subsec_fourier}

First, we will cover the standard notation for Fourier analysis
in this setting. For a generalized and thorough introduction, 
we recommend \cite[Chapter 4]{TV}.
Let $f : \Z_m \to [0, 1]$, which we associate with a probabilistic coloring via
\begin{equation}
f(t) = \P[t \text{ is red}].
\end{equation}
When we identify elements in $\Z_m$ with the integers
$0, 1, \dots, m - 1$, both
\begin{equation*}
f(t) \quad \text{and} \quad e^{-2\pi i \xi t / m} \quad (t, \xi \in \Z_m)
\end{equation*}
well-defined notions. 
The \textbf{Fourier transform of} $f$, denoted $\widehat{f}$,
is the function from $\Z_m$ to $\C$ given by
\begin{equation}
\widehat{f}(\xi) \coloneqq \frac{1}{m}\sum_{t \in \Z_m} f(t) e^{-2\pi i \xi t/m}.
\end{equation}
In the arguments of \cite{CCS, FPZ, V}, it was crucial that the order
of the group was relatively prime to each coefficient of the equation. 
We will use similar tools, but we will actually exploit the
fact that these results do not always hold without this condition.

Without loss of generality we may assume $|c| = \max\{|a|, |b|, |c|\}$. Put $m = |c|$.
In order to use Fourier transforms effectively, we need 
additional assumptions:
\begin{align}
& m > |a|, |b|, \label{eq_unique_max}\\
& \text{One of } \gcd(a, m), \; \gcd(b, m) \text{ is equal to } 1, \label{eq_low_gcd}\\
& a + b  \not\equiv 0 \mod m). \label{eq_zero_sum}
\end{align}
Our goal now is to show that all equations of this form are uncommon
over $\Z_m = \Z_{|c|}$, as this combined with Lemma \ref{lem_mod_ok}
implies they are also uncommon over $[n]$.
We will then show, using various other techniques,
that the equations not satisfying
one of the above assumptions are still uncommon.

First, 
we can write the expected proportion of red solutions
in terms of Fourier transforms: 
\begin{equation}
\widehat{f}(0)\sum_{t \in \Z_m}\widehat{f}(a t) \widehat{f}(b t).
\end{equation}
Note, this formula requires (\ref{eq_unique_max}).
Extending this idea, the expected number of monochromatic solutions is
\begin{equation}\label{prop_mc_fourier}
\mu_{\{ax+by+cz=0\}}(f) 
= \widehat{f}(0)\sum_{t \in \Z_m}\widehat{f}(a t) \widehat{f}(b t)
+ \widehat{(1-f)}(0)\sum_{t \in \Z_m}\widehat{(1-f)}(a t) \widehat{(1-f)}(b t).
\end{equation}
Therefore, to show $ax + by + cz = 0$ is uncommon over $\Z_m$,
we simply need to find an $f$ such that
(\ref{prop_mc_fourier})
is strictly less than $1/4$.
In order to simplify calculations, we will impose the restriction
$\widehat{f}(0) = 1/2$, which is equivalent to requiring overall red and blue
appear with equal probability.
This gives us
\begin{align*}
\mu_{\{ax+by+cz=0\}}(f)
&= \frac{1}{4} + \frac{1}{2}\sum_{t \in \Z_m - \{0\}}
[\widehat{f}(a t) \widehat{f}(b t) + \widehat{(1-f)}(a t) \widehat{(1-f)}(b t)] \\
&= \frac{1}{4} + \sum_{\substack{t \in \Z_m \\ at, bt \neq 0}}\widehat{f}(at)\widehat{f}(bt).
\end{align*}
The last equality follows from the fact that 
$\widehat{(1-f)}(s) = -\widehat{f}(s)$
whenever $s \neq 0$, so any summand in the first sum 
with exactly one of $at$, $bt$ equal to 0 
will be 0 (and the case $at = bt = 0$ will only
occur when $t = 0$ since the equation is fully reduced).
Therefore, it suffices to find an $f$ such that
$\widehat{f}(0) = 1/2$ and
\begin{equation}\label{deviation}
\sum_{\substack{t \in \Z_m \\ at, bt \neq 0}}\widehat{f}(at)\widehat{f}(bt) < 0.
\end{equation}
We will refer to the above sum as the \textbf{deviation}.
By the Fourier inversion formula, we may define
$f$ by its Fourier coefficients, although some care must
be taken to ensure $\text{Range}(f) \subseteq [0,1]$.
First, we will utilize the fact that
$f$ is real-valued if and only if $\widehat{f}$ is Hermitian:
$\overline{\widehat{f}(s)} = \widehat{f}(-s)$ for all $s$.
Next, we must find Fourier coefficients that guarantee 
$f$ is between $0$ and $1$.
To do this, we will use the \textbf{Fourier inversion formula}:
\begin{equation}\label{eq_fourier_inversion}
f(v) = \sum_{t \in \Z_m} \widehat{f}(t) e^{2\pi i t v / m}.
\end{equation}
By requiring $\widehat{f}(0) = 1/2$ and using the triangle
inequality with (\ref{eq_fourier_inversion}), we have
\begin{equation}\label{eq_fourier_triangle}
|f(v) - 1/2| \leq \sum_{t \in \Z_m - \{0\}}|\widehat{f}(t)|.
\end{equation}

Regarding Assumption (\ref{eq_low_gcd}),
without loss of generality we may assume $\gcd(a, m) = 1$.
We split the work into two cases:  $a \neq b$ and $a = b$.
When $a \neq b$, we may set $\widehat{f}(\pm a) = -1/8$, 
$\widehat{f}(\pm b) = 1/9$, and $\widehat{f}(s) = 0$
for all other $s \neq 0$.
Note that if we did require (\ref{eq_zero_sum}),
these choices could not be made.
With this $\widehat{f}$ is Hermitian, and
by (\ref{eq_fourier_triangle}) $0 \leq f(v) \leq 1$
for all $v$.
Now we argue the deviation is negative.
Here, the deviation will have at least two negative terms
and at most two positive terms. 
To see this, the negative terms are guaranteed by $t = \pm 1$,
which are distinct since (\ref{eq_unique_max}) and
(\ref{eq_zero_sum}) together imply $m \geq 3$.
Positive terms arise when 
\[(at, bt) \in \{(\pm a, \pm a), (\pm a, \mp a), (\pm b, \pm b), (\pm b, \mp b)\}.\]
Since $\gcd(a, m) = 1$, $at$ has a unique solution modulo $m$,
so $(at, bt) \in \{(\pm a, \pm a), (\pm a, \mp a)\}$ will only occur
when $t = \pm 1$. In the other cases, $t = \pm b a^{-1} \notin \{\pm 1\}$
is possible.
Therefore, the deviation is at most
\[-2\cdot \frac{1}{8}\cdot\frac{1}{9} + 2\cdot \frac{1}{9^2} < 0,\]
and hence the equation is uncommon over $\Z_m$.

If $a = b$, then we simply take $\widehat{f}(\pm a) = \pm i/4$
and $\widehat{f}(s) = 0$ for all other $s \neq 0$.
Again, $\widehat{f}$ is Hermitian and $f$ takes
values within $[0, 1]$, and here
the deviation $-1/8$.
This covers all cases, proving any equation satisfying the initial
assumptions (\ref{eq_unique_max}), (\ref{eq_low_gcd}), and (\ref{eq_zero_sum})
is uncommon over $\Z_m$ and is therefore uncommon over $[n]$ by Lemma \ref{lem_mod_ok}.
Next we will cover equations that do not satisfy those assumptions.

\subsection{Remaining Equations}

As discussed previously, the Fourier-analytic techniques
do not cover every equation. 
Recall the assumptions we needed:
\begin{align*}
(\ref{eq_unique_max}) \quad & m > |a|, |b|, \\
(\ref{eq_low_gcd}) \quad & \text{One of } \gcd(a, m), \; \gcd(b, m) \text{ is equal to } 1, \\
(\ref{eq_zero_sum}) \quad & a + b \not\equiv 0 \mod m).
\end{align*}
If (\ref{eq_low_gcd}) does not hold, then we may assume one of
these gcds is at least 3, as they cannot both be 2 with the equation
fully reduced.
For these equations, we have the following proposition.

\begin{prop}\label{prop_high_gcd}
Every 3-term equation with two coefficients that have 
a common factor of at least 3 is uncommon over $[n]$.
\end{prop}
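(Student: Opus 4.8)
The plan is to invoke Lemma~\ref{lem_mod_ok} with a very small modulus tailored to the equation. Write the equation as $a_1x_1 + a_2x_2 + a_3x_3 = 0$ with $\gcd(a_1,a_2,a_3)=1$, and let $d \geq 3$ be a common factor of two of the coefficients, say $d \mid a_1$ and $d \mid a_2$ (after relabeling the variables, which does not change the structure of the solution set). I would first record two elementary facts: $d \nmid a_3$ (otherwise $d \mid \gcd(a_1,a_2,a_3) = 1$), and in fact $\gcd(a_3,d) = 1$, since any prime dividing both $a_3$ and $d$ would divide all of $a_1,a_2,a_3$.

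The key observation is that reducing the equation modulo $d$ collapses its solution set onto a single coordinate hyperplane. Indeed, since $a_1 \equiv a_2 \equiv 0 \pmod d$, a triple $(x_1,x_2,x_3) \in \Z_d^3$ belongs to $T_E(\Z_d)$ exactly when $a_3 x_3 \equiv 0 \pmod d$, and because $\gcd(a_3,d) = 1$ this is equivalent to $x_3 = 0$. Hence $T_E(\Z_d) = \{(x_1,x_2,0) : x_1,x_2 \in \Z_d\}$, a set of size $d^2 = d^{k-1}$ (in agreement with $|T_E(\Z_d)| = d^{k-1}$, which holds for every modulus since the equation is fully reduced).

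Then I would use the coloring $f : \Z_d \to \{-1,1\}$ with $f(0) = 1$ and $f(t) = -1$ for all $t \neq 0$. Every monochromatic solution lies in $T_E(\Z_d)$ and so has the form $(x_1,x_2,0)$ with $f(x_1) = f(x_2) = f(0) = 1$; since $0$ is the only element of color $1$, this forces $x_1 = x_2 = 0$. Thus $(0,0,0)$ is the unique monochromatic solution and $\mu_E(\Z_d) \leq 1/d^2 \leq 1/9$, so Lemma~\ref{lem_mod_ok} gives $\limsup_{n\to\infty}\mu_E([n]) \leq 1/9 < 1/4$, which is precisely the statement that $E$ is uncommon over $[n]$.

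I do not expect a genuine obstacle here: the only thing that requires a moment's thought is recognizing that $d$ itself (rather than $|c|$, as in the Fourier-analytic argument) is the right modulus, after which the collapse of $T_E(\Z_d)$ onto the hyperplane $\{x_3 = 0\}$ makes the coloring essentially forced and the verification is immediate. The bound $1/9$ is crude — with modulus $d$ this coloring is in fact optimal, since every coloring of $\Z_d$ leaves the triple $(0,0,0)$ monochromatic — but $1/9 < 1/4$ is all that is needed.
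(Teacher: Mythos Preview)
Your proof is correct and follows essentially the same approach as the paper: work in $\Z_d$ for a modulus dividing two of the coefficients, observe that the solution set collapses to a coordinate hyperplane because $\gcd(a_3,d)=1$, and isolate the residue class $0$ with its own color so that only $(0,0,0)$ is monochromatic. The only cosmetic differences are that the paper takes $d=\gcd(a,c)$ rather than an arbitrary common factor $\geq 3$ and swaps the roles of the two colors; neither affects the argument.
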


\begin{proof}
Without loss of generality, assume $m = \gcd(a, c) \geq 3$.
As done previously, we will work in $\Z_m$.
The coloring is quite simple: $f(0) = -1$, and $f(t) = 1$ otherwise.
Note that since the equation is fully reduced, 
every solution will be of the form $(x, 0, z) \in \Z_m^3$,
and $x,z$ are unrestricted.
Therefore, only one solution, namely $(0, 0, 0)$, will be monochromatic,
and hence the monochromatic proportion is $1/m^2 \leq 1/9 < 1/4$.
By Lemma \ref{lem_mod_ok} this extends to a coloring of $[n]$,
and hence the equation is uncommon over $[n]$.
\end{proof}

Equations where (\ref{eq_zero_sum}) does not hold are equivalent to one
of two types of equations:
\[ax + by = (a + b)z \quad \text{and} \quad
ax - ay + cz = 0 \quad (a, b, c \in \N).\]
The first type is equivalent to a \textit{constellation}
shown to be uncommon in \cite{BCG}.
Equations of the second type with $|a| \geq 3$
can be eliminated by Proposition \ref{prop_high_gcd},
which does not require that $c$ is the largest
coefficient.
If $|a| = 1$, the equations are equivalent to ones of the form $x - y + cz = 0$,
which were shown to be uncommon in \cite{TW} (in fact, the authors
found asymptotic minima).
If $|a| = 2$, we are left with equations of the form
\begin{equation}
2x - 2y + cz = 0.
\end{equation}

If (\ref{eq_unique_max}) does not hold but the largest
coefficients are at least 3, 
then Proposition \ref{prop_high_gcd}
ensures these equations are uncommon.
Up to equivalence, the equations left in this case are
\begin{equation*}
x + y - z = 0, \quad 2x - y + 2z = 0, \quad \text{and} \quad
2x + y - 2z = 0.
\end{equation*}
The first equation is Schur's equation, discussed previously.
Therefore, the only equations not yet covered are, up to equivalence:
\begin{equation}
2x - 2y + cz = 0 \;\; \qquad \text{and} \qquad 2x - y + 2z = 0.
\end{equation}
We now describe colorings for these equations
that yield asymptotically fewer than a $1/4$ proportion
of monochromatic solutions, 
and detailed computations can be found in Appendix B.

The colorings for equations of the form $2x - 2y + cz = 0$
all have a similar construction:
alternate between red and blue until some boundary point $\alpha n$
that depends on $c$,
and then color from $\alpha n$ to $n$ entirely red.
Let the coloring $f : [n] \to \{-1, 1\}$ be defined as follows:
\[f(t) = \begin{cases}
-1, & t \text{ even, } t \leq \alpha n, \\
1, & \text{otherwise},
\end{cases}
\qquad \text{where} \qquad
\alpha = \begin{cases}
3/4, & c = 1, \\
2/c, & c \geq 3
\end{cases}\]
(note that $c$ is odd because our equations
are fully reduced).
With these colorings, we get monochromatic proportions of
\[\begin{cases}
5/24 + o(1), & c = 1, \\
1/c^2 + o(1), & c \geq 3,
\end{cases}\]
both of which are asymptotically less than $1/4$,
proving these equations are uncommon.

For the final equation, $2x - y + 2z = 0$,
we use the following coloring:
\begin{center}
\begin{tikzpicture}
\draw[ultra thick, blue] (0,0) -- (1.25,0);
\draw[ultra thick, red] (1.25,0) -- (5, 0);
\draw[ultra thick, blue] (5,0) -- (10,0);
\draw[thick] (0,0.2) -- (0,-0.2);
\draw (0,-0.2) node[anchor = north]{1};
\draw[thick] (1.25,0.2) -- (1.25,-0.2);
\draw (1.25,-0.2) node[anchor = north]{$n/8$};
\draw[thick] (5,0.2) -- (5,-0.2);
\draw (5,-0.2) node[anchor = north]{$n/2$};
\draw[thick] (10,0.2) -- (10,-0.2);
\draw (10,-0.2) node[anchor = north]{$n$};
\end{tikzpicture}
\end{center}
(or as close to this as possible if $n$ is not a multiple of $8$).
With this coloring, the proportion of monochromatic solutions is
$1/64 + o(1)$, far less than the $1/4$ threshold.
This finally proves Theorem \ref{thm_main_result}, i.e.
all 3-term equations are uncommon over $[n]$.
Next, we will calculate lower bounds for a specific class
of 3-term equations.

\section{Lower Bounds}

Every equation
\begin{equation}
a_1 x_1 + \cdots + a_k x_k = 0 \;\; (a_i \in \Z)
\end{equation}
has $C n^{k-1} + O(n^{k-2})$ solutions,
and we believe a positive fraction of these
will always be monochromatic.
We state this another way with the following 
conjecture.
\begin{conjecture}
Given an equation $a_1 x_1 + \cdots + a_k x_k = 0$,
$a_i \in \Z$,
every coloring has $\Omega(n^{k-1})$ monochromatic solutions
over $[n]$.
\end{conjecture}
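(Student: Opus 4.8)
The plan is to mirror the organization of Section~\ref{upper_bounds} and of our treatment of $ax+ay=cz$, handling the equations in classes. Whenever some nonempty subcollection of the coefficients sums to zero, the Frankl--Graham--R\"odl theorem \cite{FGR} already yields $\Omega(n^{k-1})$ monochromatic solutions, so one is reduced to the \emph{Rado-irregular} equations --- those with no nonempty zero-sum subset of coefficients --- and among these we may assume the coefficients are not all of one sign, so that the equation genuinely has $Cn^{k-1}$ solutions over $[n]$ with $C>0$. For $k=3$ these are, up to the obvious symmetries (negating the equation and relabelling variables) and after dividing out $\gcd(a,b,c)$, exactly the equations $ax+by=cz$ with $a,b,c\in\N$ and $c\notin\{a,b,a+b\}$; the subfamily $a=b$ is handled by the robust-Freiman argument below, so the first genuinely new target is $a\neq b$. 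Arguing by contradiction, suppose a sequence of $2$-colorings $[n]=R_n\cup B_n$ leaves only $o(n^2)$ monochromatic solutions; then the number of solutions with all three coordinates in $R_n$, and likewise the number with all coordinates in $B_n$, is $o(n^2)$.

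The crux is an additive-combinatorial dichotomy. Writing the all-red count as $N_{R}=\sum_{z\in R}r_{aR,bR}(cz)$, where $r_{aR,bR}$ counts representations in the restricted sumset $aR+bR$, and using $\sum_{w}r_{aR,bR}(w)=|R|^{2}$, the assumption $N_R=o(n^2)$ forces $r_{aR,bR}$ to be $o(n)$ on almost all of $cR$; since $|R|$ is then a positive proportion of $n$ (if $|R|=o(n)$ the complement is all but $o(n)$ of $[n]$ and a direct count already produces $\Omega(n^2)$ all-blue solutions), this is a strong rigidity statement about how $R$ sits inside $[n]$. The intended next step is to feed this into a robust inverse/stability theorem --- the robust form of Freiman's $3k-4$ theorem of \cite{SX} that we use for $ax+ay=cz$, or Balog--Szemer\'edi--Gowers together with Freiman's theorem in $\Z$ to cope with the asymmetric sumset $aR+bR$ when $a\neq b$ --- and conclude that $R$, and by symmetry $B$, agrees off $o(n)$ elements with a union of a bounded number of arithmetic progressions. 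The problem then collapses to a finite optimization: among all colourings of $[n]$ into $O(1)$ ``progression blocks,'' minimize the leading coefficient of the monochromatic-solution count and verify that it is strictly positive --- precisely the kind of computation Robertson and Zeilberger carried out for Schur's equation \cite{RZ}, where the minimum came out to $1/11$.

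I expect the inverse-theorem step to be the main obstacle. The clean $3k-4$ theorem is tailored to $X+X$, and for $ax+by=cz$ with $a\neq b$ one needs a stability result for the asymmetric restricted sumset $aX+bX$ with losses at worst polynomial in the relevant parameters; such statements are considerably more delicate than the symmetric case, and it may be necessary to route through weaker, logarithmic-loss inverse theorems and absorb the cost elsewhere. Two further hurdles: the passage to $k\geq 4$, where the Rado-irregular equations do not reduce to $3$-term ones and one must either fibre over the extra variables $x_4,\dots,x_k$ --- applying an inhomogeneous $3$-term bound on each fibre while controlling, uniformly in $n$, the proportion of fibres on which it degenerates --- or prove a genuinely higher-dimensional structure theorem; and the concluding optimization, which must yield a positive lower bound \emph{uniformly} over all residual equations and all bounded block-structures, something not automatic for arbitrary coefficient vectors. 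Finally, Lemma~\ref{lem_mod_ok} is of no help here, since it transfers only \emph{upper} bounds, so unlike in the proof of Theorem~\ref{thm_main_result} we cannot pass to a fixed modulus; the $[n]$-specific character of the conjecture --- over any abelian group of order coprime to the coefficients the odd-term equations are in fact \emph{common} \cite{CCS} --- is exactly the source of the difficulty.
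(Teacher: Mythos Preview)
The statement you are attempting to prove is labelled a \emph{Conjecture} in the paper, and the paper does not prove it. The paper explicitly says it makes only partial progress, namely Theorem~\ref{thm_lower_bound} for the symmetric family $ax+ay=cz$, via the robust Freiman $3k-4$ theorem of \cite{SX}. There is therefore no ``paper's own proof'' to compare your proposal against; what you have written is a research programme for attacking the open conjecture, and you yourself flag its gaps honestly.

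Those gaps are genuine. The passage from $N_R=o(n^2)$ to ``$R$ agrees with a bounded union of progressions up to $o(n)$ points'' is not supported by any theorem you cite: the robust $3k-4$ theorem of \cite{SX} concerns the symmetric sumset $X+X$ and gives a single progression, not a bounded union, and only under the hypothesis $|X+_\Gamma X|<(1+\theta)|X|$; an asymmetric version for $aX+bX$ with $a\neq b$ is not available in that form. Balog--Szemer\'edi--Gowers plus Freiman would at best locate a \emph{positive-density} subset of $R$ inside a generalized progression, which is far from the near-complete structure you need for the ``finite optimization'' step. The optimization step itself --- showing the leading coefficient is strictly positive uniformly over all coefficient vectors and all bounded block-colourings --- is an entirely separate problem for which no mechanism is proposed. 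And for $k\geq 4$ the fibring idea requires an inhomogeneous $3$-term bound uniform in the inhomogeneity, which is again not established. In short, your outline correctly isolates the difficulties, but none of them is resolved, and the paper does not resolve them either; it proves only the $a=b$ case of $k=3$.
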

As stated previously, a result of Frankl, Graham, and R\"{o}dl
confirms this conjecture
for equations which have a subset of coefficients that sum to 0 \cite{FGR}.
And in fact, they showed this for systems of equations (with an 
analogous assumption on the coefficients) and colorings of
an arbitrary number of colors.
They also showed that this is not necessarily true
for equations in general via the equation
$x + y - 3z = 0$ using 5 colors.
We expect this lower bound on the number of solutions
to still hold when only two colors are used.
We make partial progress towards this conjecture .
\begin{theorem}\label{thm_lower_bound}
Equations of the form
$ax + ay - cz = 0$ ($a, c \in \N$)
always have $\Omega(n^2)$ monochromatic solutions.
\end{theorem}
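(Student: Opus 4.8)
The plan is to reduce to a purely additive-combinatorics statement about the color classes and then invoke the robust Freiman $3k-4$ structure theorem mentioned in the introduction. Fix a coloring $f:[n]\to\{-1,1\}$ and let $R$ and $B$ be the red and blue classes, so $|R|+|B|=n$ and at least one of them, say $R$, has size $\ge n/2$. A monochromatic solution to $ax+ay-cz=0$ is a triple $(x,y,z)$ with $x,y,z$ the same color and $a(x+y)=cz$. Writing this as $x+y = (c/a)z$ after clearing $\gcd(a,c)$ (so WLOG $\gcd(a,c)=1$, and then $a\mid z$, $c\mid x+y$ — I would handle these divisibility restrictions by passing to the relevant arithmetic progressions, which costs only constant factors), the count of monochromatic red solutions is, up to constants and lower-order terms, the number of ways to write an element of $(c/a)R$ as a sum of two elements of $R$, i.e. it is governed by $|\{(x,y)\in R\times R : x+y\in \lambda R\}|$ for the fixed rational $\lambda=c/a$. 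So it suffices to show this quantity is $\Omega(n^2)$.

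The key step is a dichotomy on the additive structure of $R$. If $R$ has small doubling, say $|R+R|\le K|R|$ for a suitable absolute constant $K$, then the robust Freiman $3k-4$ theorem \cite{SX} tells us $R$ is contained in (and fills a positive proportion of) a genuine arithmetic progression $P$ of length $O(|R|)=O(n)$; one then checks directly that for a long-AP-like set, a positive fraction of the pairs $(x,y)\in R^2$ have $x+y$ landing in $\lambda R$ — this is essentially a statement about dilates of intervals overlapping, and the robustness ("$3k-4$" gives structure from an approximate rather than exact hypothesis) is exactly what lets us tolerate the constant-fraction losses from the divisibility reductions above. If instead $R$ has large doubling, $|R+R| > K|R|$, then the sumset $R+R$ is spread out, and we should instead count solutions more crudely: the number of pairs $(x,y)\in R^2$ with $a(x+y)\equiv 0$ and $(x+y)/c \in R$ can be lower-bounded by a Cauchy–Schwarz / popularity argument on the representation function $r_{R+R}$, using $|R|\ge n/2$ so that $R+R$ must already cover a constant fraction of $[2n]$ by pigeonhole and hence overlaps $\lambda R$ in $\Omega(n)$ values each with $\Omega(1)$ representations on average. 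Concretely: $\sum_{s} r_{R+R}(s) = |R|^2 = \Omega(n^2)$, and restricting $s$ to the $\Omega(n)$-sized set where $s\in\lambda R$ and the divisibility holds, combined with an upper bound $r_{R+R}(s)=O(n)$, forces the restricted sum to be $\Omega(n^2)$ as well — here one needs that $\lambda R$ is not "accidentally disjoint" from the popular part of $R+R$, which follows because both have density bounded below in an interval of length $O(n)$.

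The main obstacle I anticipate is the large-doubling case, specifically ruling out the pathological possibility that $\lambda R$ sits almost entirely outside the support of $R+R$ (or only over the unpopular part of it). This is where I expect to actually need the structure theorem in its robust form rather than a soft pigeonhole: one likely wants to argue that if $\lambda R$ avoids the popular sums of $R+R$, then $R$ itself is forced into a structured configuration (since $\lambda R$ and $R$ are dilates of the same set, a separation between $R$ and $R+R$ at scale $n$ is a strong constraint), bringing us back to the small-doubling regime. Making this "either structured or the counting is easy" dichotomy genuinely airtight — with all constants surviving the $\gcd$, residue-class, and interval-truncation reductions — is the part that will require care; the individual calculations (sumset representation counts for APs, the Cauchy–Schwarz popularity bound) are routine. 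I would also double-check the edge cases $a=c$ (the equation $x+y=z+\text{(repeat)}$-flavored degeneracy) and small $a,c$ separately, as the divisibility reductions behave slightly differently there.
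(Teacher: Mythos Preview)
Your proposal correctly identifies the robust Freiman $3k-4$ theorem as the central structural tool and sets up a reasonable dichotomy on the doubling of a color class; the small-doubling branch is fine in spirit. The genuine gap is exactly the one you flag yourself: in the large-doubling case there is no mechanism forcing $\lambda R$ to meet the popular part of $R+R$. The Cauchy--Schwarz/popularity sketch (``$\sum_s r_{R+R}(s)=\Omega(n^2)$, each term is $O(n)$, now restrict to an $\Omega(n)$-sized set'') simply does not give the conclusion --- all the mass could sit off $\lambda R$. Your proposed rescue (``if $\lambda R$ avoids the popular sums then $R$ is forced to be structured'') is not obviously true and would itself need a new idea; a dilate and a sumset of the same dense set can be essentially disjoint without the set being AP-like.

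The paper closes this gap with a two-color pigeonhole you are missing. It restricts \emph{both} classes to multiples of $c$, getting $R',B'$ with $|R'|,|B'|=\Omega(n)$ (the case where one is tiny is handled trivially), and then considers the \emph{robust} sumsets $R'+_{\Gamma(R')}R'$ and $B'+_{\Gamma(B')}B'$, where $\Gamma$ retains only those pairs whose sum has at least $C_2 n$ representations. The dichotomy is now: either one of these robust sumsets has size $<(2+\epsilon)|A|$, in which case the structure theorem applies to that class and one extracts a nearly monochromatic progression $\{dk\}$ on which solutions are counted directly; or \emph{both} robust sumsets exceed $(2+\epsilon)|A|$. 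In the latter case both live inside $c\Z\cap[2n]$, a set of size $\lfloor 2n/c\rfloor$, so inclusion--exclusion forces them to intersect in $\Omega(n)$ elements. Any $v$ in the intersection has $\ge C_2 n$ representations from $R'$ and $\ge C_2 n$ from $B'$; whichever color is relevant for $v$, the representations of that color each give a monochromatic solution, yielding $\Omega(n^2)$ in total. The point is that you never need $\lambda R$ to hit $R+R$ --- you need $R'+R'$ and $B'+B'$ to hit \emph{each other}, and that is automatic by size.
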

We will prove this by using the structure theorem from
Xuancheng Shao and Max Wenqiang \cite{SX}.
We may assume $a$ and $c$ are relatively prime.
Fix a coloring $f : [n] \to \{-1, 1\}$, and let 
$R = f^{-1}(\{1\})$ and $B = f^{-1}(\{-1\})$
denote the red and blue elements, respectively.
We will actually show there are $\Omega(n^2)$ monochromatic
solutions just among the multiples of $c$, so we denote
$R' = R \cap c\Z$ and $B' = B \cap c\Z$.
Let $C_1$ and $C_2$ be small, positive constants 
possibly depending on $a$ and $c$ to 
be determined later.

\begin{claim}\label{claim_large_sets}
If $|R'| \leq C_1 n$ for sufficiently small $C_1$, then there
are $\Omega(n^2)$ blue solutions. 
\end{claim}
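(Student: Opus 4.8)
The plan is to exploit the assumption that the red multiples of $c$ are scarce. Write $S = \{1, 2, \dots, \lfloor n/c \rfloor\}$, so that $c\Z \cap [n]$ is in bijection with $S$ via $z \mapsto cz$; under this bijection, the equation $ax + ay - cz = 0$ restricted to multiples of $c$ becomes $a x' + a y' = z'$ with $x', y', z' \in S$, i.e.\ (after dividing by $\gcd$) a genuine $3$-term equation on an interval of length $\Theta(n)$. Let $\mathcal{R}' \subseteq S$ and $\mathcal{B}' \subseteq S$ be the images of $R'$ and $B'$, so $|\mathcal{B}'| = |S| - |\mathcal{R}'| \geq (1 - C_1 c) |S| \geq |S|/2$ once $C_1$ is small enough. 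A solution $(x', y', z')$ to $ax' + ay' = z'$ is blue (monochromatic in $B$) precisely when all three of $x', y', z'$ lie in $\mathcal{B}'$.

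First I would count the total number of solutions to $ax' + ay' = z'$ with all variables in $S$: for each choice of $x', y' \in S$ with $a x' + a y' \leq \lfloor n/c \rfloor$, there is exactly one $z'$, and the number of such pairs is $\Theta((n/c)^2) = \Theta(n^2)$ (the implied constant depends on $a$ and $c$). Next I would bound the number of these solutions that are \emph{not} blue, i.e.\ that have at least one coordinate in $\mathcal{R}'$. If $x' \in \mathcal{R}'$, the number of completions is at most $|S| \cdot |\mathcal{R}'| \leq C_1 n \cdot n / c = (C_1 / c) n^2$; the same bound holds if $y' \in \mathcal{R}'$. If $z' \in \mathcal{R}'$, then $z'$ takes at most $|\mathcal{R}'| \leq C_1 n$ values, and for each such $z'$ the number of pairs $(x', y')$ with $a x' + a y' = z'$ is at most $z'/a \leq n/(ac)$, giving at most $(C_1/(ac)) n^2$ solutions. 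Summing these contributions, the number of non-blue solutions is at most $C_1 \bigl(2/c + 1/(ac)\bigr) n^2$, which is a small constant times $n^2$.

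Subtracting, the number of blue solutions among the multiples of $c$ is at least $\bigl(c_0 - C_1 (2/c + 1/(ac))\bigr) n^2$ for some absolute constant $c_0 > 0$ coming from the total count; choosing $C_1$ small enough (depending only on $a$ and $c$) makes this at least $(c_0/2) n^2 = \Omega(n^2)$. I do not expect a genuine obstacle here — this claim is the ``easy side'' of the dichotomy, a straightforward double-counting argument whose only subtlety is keeping the dependence of the constants on $a$ and $c$ consistent with the later choices of $C_1, C_2$. The real work of Theorem \ref{thm_lower_bound} will be the complementary regime, where both $|R'|$ and $|B'|$ are $\Omega(n)$ and one must invoke the robust Freiman $3k-4$ structure theorem of \cite{SX} to locate a long arithmetic progression inside one color class and produce $\Omega(n^2)$ monochromatic solutions there.
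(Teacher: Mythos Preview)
Your approach is the paper's: both observe that there are $\Omega(n^2)$ solutions among the multiples of $c$, subtract off the $O(C_1 n^2)$ solutions meeting $R'$ (the paper compresses your three cases into the single remark that each element of $R'$ lies in at most $3n$ solutions), and conclude that $\Omega(n^2)$ all-blue solutions remain for $C_1$ small.

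One arithmetic slip worth fixing: under $x = c x',\ y = c y',\ z = c z'$ the equation $a x + a y = c z$ becomes $a x' + a y' = c z'$, not $a x' + a y' = z'$ (only one factor of $c$ cancels). This does not damage the argument---the total count is still $\Theta(n^2)$ and each fixed coordinate still admits $O(n)$ completions---but your explicit constants (e.g.\ the $1/(ac)$ term) would change accordingly.
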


\begin{proof}
There are $\Omega(n^2)$ total solutions involving only 
multiples of $c$. 
Since each number in $R'$ is present in at most $3n$ solutions,
by assumption there are at most $3C_1 n^2$ solutions with 
an input from $R'$.
If we make $C_1$ small enough,
this still leaves $\Omega(n^2)$ solutions with inputs 
exclusively from $B'$.
\end{proof}

By this claim, we may assume $|R'|, |B'| \geq C_1 n$.
Now we will cover some necessary notation.
Let $X, Y \subseteq \Z$.
The \textbf{sum set} of two sets $X$ and $Y$,
denoted $X + Y$, is
\begin{equation}
X + Y = \{x + y \,:\, x \in X, y \in Y\}.
\end{equation}
The basic outline of our argument is as follows:
if the sum sets $R' + R'$ and $B' + B'$ are both large, 
then they will
have a nontrivial intersection, 
and if one of these sum sets is small, 
then $[n] \cap c\Z$ will contain large 
monochromatic arithmetic progressions,
and both cases imply there will be $\Omega(n^2)$
monochromatic solutions.
Rather than use sum sets,
We will use the \textit{robust} sum sets
defined in \cite{SX}:
given a subset $\Gamma \subseteq X \times Y$,
let
\begin{equation}
X +_\Gamma Y \coloneqq \{x + y \,:\, (x,y) \in \Gamma\}.
\end{equation}
For $A \in \{R', B'\}$,
let $\Gamma = \Gamma(A)$ be the set
of all pairs in $A \times A$ whose sum
has at least $C_2 n$ distinct 
representations as a sum of pairs, i.e.
\begin{equation}\label{eq_gamma}
\Gamma = \{(a_1, a_2) \in A \times A \,:\,
|\{(b_1, b_2) \in A \times A \,:\,
b_1 + b_2 = a_1 + a_2\}| \geq C_2 n\}.
\end{equation}

We will first show that if 
the robust sum sets in question are large,
then we will have $\Omega(n^2)$ monochromatic solutions.
Let $\epsilon > 0$. We leave it arbitrary for now,
but later we will pick a specific $\epsilon$
which depends on $C_1$ and $C_2$.

\begin{claim}
If $|A +_{\Gamma(A)} A| \geq (2 + \epsilon)|A|$
for $A = R',B'$, then
\begin{equation}\label{eq_big_intersection}
|(R' +_{\Gamma(R')} R') \cap (B' +_{\Gamma(B')} B')| = \Omega(n),
\end{equation}
which implies there are $\Omega(n^2)$ monochromatic solutions.
\end{claim}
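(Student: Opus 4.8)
The plan is to exploit the assumption $|A +_{\Gamma(A)} A| \ge (2+\epsilon)|A|$ with $|A| \ge C_1 n$ to get a lower bound on the size of each robust sum set, namely $|R' +_{\Gamma(R')} R'|, |B' +_{\Gamma(B')} B'| \ge (2+\epsilon)C_1 n$, while both sets live inside $\{2,4,\dots,2n\} \cap 2c\Z$ (sums of two multiples of $c$ in $[n]$), which has size $\asymp n/c$. Since $(2+\epsilon)C_1 n + (2+\epsilon)C_1 n$ can be made to exceed the ambient size $n/c + O(1)$ by an inclusion–exclusion / pigeonhole argument only if $C_1$ is not too small — so instead I would argue directly: two subsets of a common ground set of size $N$, each of size $\ge \delta N$ with $2\delta > 1$, must overlap in $\ge (2\delta - 1)N$ elements. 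Here I cannot assume $2(2+\epsilon)C_1 > $ the density in the ambient set, so the intersection bound must come from a more careful counting: I will instead show that a \emph{constant fraction} (depending on $C_1, C_2, \epsilon$) of the robust sum set of $R'$ must coincide with elements of the robust sum set of $B'$, or else derive a contradiction with $|R'| + |B'| = |[n]\cap c\Z| \sim n/c$. The cleanest route is: the robust sum sets are subsets of the interval $[2c, 2n] \cap 2c\Z$ of size $\sim n/c$, and $|R' +_{\Gamma(R')} R'| + |B' +_{\Gamma(B')} B'| \ge (2+\epsilon)(|R'| + |B'|) = (2+\epsilon)\cdot\tfrac{n}{c}(1+o(1))$, which strictly exceeds twice the ambient size for $\epsilon$ fixed — wait, that forces overlap of size $\ge \epsilon \cdot \tfrac{n}{c}(1+o(1)) = \Omega(n)$ by pigeonhole. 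So (\ref{eq_big_intersection}) follows once I verify $|R' +_{\Gamma(R')} R'| + |B' +_{\Gamma(B')} B'| > (2 + \epsilon/2)\cdot|[n]\cap c\Z|$ for large $n$.

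Next I would convert the intersection bound into monochromatic solutions. An element $w$ in $R' +_{\Gamma(R')} R'$ has, by definition of $\Gamma(R')$, at least $C_2 n$ representations $w = r_1 + r_2$ with $r_1, r_2 \in R'$; similarly for $B'$. The target equation $ax + ay = cz$ with $x = y$ specialized is not quite what we want, but note $ax + ay = cz$ with all of $x,y,z$ multiples of $c$: writing $x = c x'$, $y = c y'$, $z = c z'$, the equation becomes $a(x' + y') = z'$ after dividing by $c$ — hmm, more carefully $acx' + acy' = c\cdot cz'$, i.e. $a(x' + y') = cz'$; since $\gcd(a,c) = 1$ this pins down $z'$ in terms of $x' + y'$. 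The point is that a monochromatic pair $(r_1, r_2)$ summing to a common value $w$ forces a monochromatic $z$ of the correct color only if $z = \tfrac{a w}{c^2}$ (suitably interpreted) is itself that color and in range. This is where I need the intersection: if $w$ lies in \emph{both} robust sum sets, then $w$ has $\ge C_2 n$ red representations and $\ge C_2 n$ blue representations, and the value $\tfrac{a}{c}w$ (an integer, a multiple of $c$, in $[n]$ after checking ranges) is \emph{some} color, hence completes $\ge C_2 n$ monochromatic solutions of that color. Summing over the $\Omega(n)$ values $w$ in the intersection — after restricting to the range where $\tfrac{a}{c}w \in [n]\cap c\Z$, still a positive fraction — gives $\Omega(n)\cdot C_2 n = \Omega(n^2)$ monochromatic solutions, with the implied constant depending on $a, c, C_1, C_2, \epsilon$.

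The main obstacle I anticipate is the bookkeeping around which variable plays the role of $z$ and the range restrictions: in $ax + ay = cz$, the "output" $z = \tfrac{a(x+y)}{c}$ can be as large as $\tfrac{2an}{c}$, so only inputs $x + y \le \tfrac{cn}{a}$ give a valid $z \in [n]$; I must check that a constant fraction of the intersection (\ref{eq_big_intersection}) survives this truncation, which is fine since the robust sum sets and their intersection are spread across $[2c, 2n]$ and a fixed-proportion sub-interval retains $\Omega(n)$ of them (this may require arguing the robust sum sets are not pathologically concentrated near $2n$, but since $|A| \ge C_1 n$ one always has $\Omega(n)$ of the sum-set elements below, say, $n$). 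A secondary subtlety is ensuring the $\ge C_2 n$ representations $w = r_1 + r_2$ are genuinely distinct \emph{ordered} pairs and that $r_1, r_2, z$ being the same color is automatic for the $z$-side — but $z$ has a fixed color once $w$ is fixed, and it agrees with the color of the representation pairs precisely for the $w$ in the intersection whose $z$-color matches; handling the two color-cases (is $\tfrac{a}{c}w$ red or blue?) just splits the $\Omega(n)$ intersection into two parts, one of which is $\Omega(n)$, which is all we need. Once these range and parity checks are dispatched, the counting is routine.
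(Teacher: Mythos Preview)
Your approach is essentially the paper's: add the hypothesis over $A \in \{R', B'\}$ to get $|R' +_{\Gamma} R'| + |B' +_{\Gamma} B'| \ge (2+\epsilon)\lfloor n/c\rfloor$, observe that both robust sum sets lie in $c\Z \cap [2n]$ (not $2c\Z$ --- a sum of two multiples of $c$ need only be a multiple of $c$) of size $\lfloor 2n/c\rfloor$, and apply inclusion--exclusion to get an intersection of size at least $\epsilon n/c + O(1)$; then for each $v$ in the intersection the third coordinate $z = av/c$ has some fixed color, and the $\ge C_2 n$ representations of that color yield monochromatic solutions. The paper's write-up is terser (it simply says ``if $v$ is colored red'') and does not address the range constraint $z \in [n]$ that you flag, so on that point you are being more careful than the source, not less.
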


\begin{proof}
We have
\[|R' +_{\Gamma(R')} R'| + |B' +_{\Gamma(B')} B'| 
\geq (2 + \epsilon)(|R'| + |B'|)
= (2 + \epsilon)\left\lfloor\frac{n}{c}\right\rfloor,\]
and since $A +_{\Gamma(A)} A \subseteq c\Z \cap [2n]$
(which has only $\lfloor 2n/c \rfloor$ elements),
(\ref{eq_big_intersection}) follows
from the Inclusion-Exclusion Principle.

Note that by construction
$v \in (R' +_{\Gamma(R')} R') \cap (B' +_{\Gamma(R')} B')$
corresponds to at least $C_2 n$
monochromatic solutions: if $v$ is colored red,
each distinct representation will correspond 
to a red solution, and similarly if $v$ is colored blue.
Since there are $\Omega(n)$ such $v$,
we have $\Omega(n^2)$ monochromatic solutions.
\end{proof}

Because of the above claim, 
we may now assume that one of the robust sum sets
is not too large. Without loss of generality,
suppose
\begin{equation}\label{eq_small_intersection}
|R' +_{\Gamma(R')} R'| < (2 + \epsilon)|R'|.
\end{equation}

We are now in a position to use the previously
mentioned structure theorem \cite{SX}.
Rather than state the theorem verbatim,
we state only what we need for this scenario.

\begin{theorem}\label{thm_sx}
Let $\epsilon > 0$. Suppose $|R'| \geq \max\{3, 2\epsilon^{-1/2}\}$,
and let $\Gamma \subseteq R' \times R'$ be a subset with
$|\Gamma| \geq (1 - \epsilon)|R'|^2$. If $|R' +_\Gamma R'|
< (1 + \theta - 11\epsilon^{1/2})|R'|$, where $\theta = \frac{1 + \sqrt{5}}{2}$,
then there is an arithmetic progression $P$ with
$|P| \leq |R' +_\Gamma R'| - (1 - 5\epsilon^{1/2})|R'|$,
$|R' \cap P| \geq (1 - \epsilon^{1/2})|R'|$.
\end{theorem}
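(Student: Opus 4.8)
The plan is to prove this statement as a robust, restricted-sumset version of Freiman's $3k$--$4$ theorem; it is in fact the specialization to our $\epsilon$ of the general structure theorem of \cite{SX}, so in the write-up one may simply invoke that result, but I will indicate the mechanism. The first move is normalization. Replacing $R'$ by its image under the affine map $x \mapsto (x - \min R')/d$, where $d = \gcd\{r - s : r,s \in R'\}$, we may assume $0 \in R'$, $\gcd(R') = 1$, and set $L := \max R'$. Restricted sumsets are covariant under affine maps, so $|R' +_\Gamma R'|$ is unchanged, and an arithmetic progression of common difference $d$ containing most of the original $R'$ pulls back from an \emph{interval} containing most of the normalized set. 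Hence it suffices to produce an interval $P$ with $|P| \le |R' +_\Gamma R'| - (1 - 5\epsilon^{1/2})|R'|$ and $|R' \cap P| \ge (1 - \epsilon^{1/2})|R'|$.

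The second step is to clean up $\Gamma$. Since $|(R' \times R') \setminus \Gamma| \le \epsilon |R'|^2$, a Markov argument shows that, with thresholds chosen appropriately, all but at most $\epsilon^{1/2}|R'|$ elements $r \in R'$ are \emph{good} in the sense that $\{s : (r,s) \in \Gamma\}$ and $\{s : (s,r) \in \Gamma\}$ each omit at most $O(\epsilon^{1/2}|R'|)$ values of $s$. Let $A_* \subseteq R'$ be the set of good elements, so $|A_*| \ge (1 - \epsilon^{1/2})|R'|$. I will take $P$ to be the interval spanned by $A_*$, which makes $|R' \cap P| \ge |A_*| \ge (1 - \epsilon^{1/2})|R'|$ automatic. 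The hypothesis $|R'| \ge 2\epsilon^{-1/2}$ is what guarantees $\epsilon^{1/2}|R'| \ge 2$, so that these $O(\epsilon^{1/2}|R'|)$ exceptional sets are genuinely discardable. It then remains to bound $\mathrm{diam}(A_*)$.

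The heart of the argument is a robust version of the diameter inequality underlying Freiman's theorem: classically (Freiman; Lev--Smeliansky; Stanchescu), if $B \subseteq \Z$ has $0 = \min B$ and $|B+B|$ below the $3|B|$ threshold, then $|B+B| \ge |B| + \mathrm{diam}(B)$, hence $\mathrm{diam}(B) \le |B+B| - |B|$. I would run this with $B = A_*$ and the restriction of $\Gamma$ to $A_* \times A_*$. Because every good element omits only $O(\epsilon^{1/2}|R'|)$ partners, and the classical counting can be arranged so that its running index touches only $O(1)$ new sums at each step, passing to pairs in $\Gamma$ discards at most $O(\epsilon^{1/2}|R'|)$ of the distinct sums it produces; this yields $\mathrm{diam}(A_*) \le |A_* +_\Gamma A_*| - |A_*| + O(\epsilon^{1/2}|R'|) \le |R' +_\Gamma R'| - (1 - O(\epsilon^{1/2}))|R'|$, and $P$ is then the interval of length $\mathrm{diam}(A_*) + 1$. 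Tracking the two competing losses --- how far below $3|R'|$ one may push the sumset hypothesis, versus how the $\epsilon$-fraction of missing pairs can concentrate along the interface of two near-interval blocks at the ends of $A_*$ --- is what replaces Freiman's constant $3$ by $1 + \theta = \frac{3 + \sqrt 5}{2}$ and pins down the numerical constants $11$ and $5$.

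The main obstacle is precisely this last step, and it splits into two points. First, the classical $3k$--$4$ argument has to be set up ``locally'' enough that passing from $R' + R'$ to the restricted sumset $R' +_\Gamma R'$ costs only an additive $O(\epsilon^{1/2}|R'|)$, not a multiplicative loss proportional to the density deficiency. Second, one must verify that the extremal configuration really is the pair-of-end-blocks example, so that the sharp threshold for the single-progression conclusion is $(1 + \theta)|R'|$ rather than something larger. By contrast the normalization, the Markov cleanup, and pulling the interval back to an arithmetic progression of difference $d$ are all routine.
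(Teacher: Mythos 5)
You should know that the paper itself does not prove this statement: Theorem~\ref{thm_sx} is imported verbatim (in specialized form) from the structure theorem of Shao and Xu \cite{SX}, with the sentence ``rather than state the theorem verbatim, we state only what we need.'' So the move you mention in passing---simply invoking \cite{SX}---is exactly what the paper does, and for the purposes of this paper that is all that is required.

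Judged as an actual proof sketch, however, your proposal has a genuine gap precisely where you flag it. The normalization to $\gcd$ one and the Markov-type cleanup extracting the set $A_*$ of good elements are indeed routine, but they carry none of the content of the theorem; the entire difficulty is the robust diameter inequality with the specific threshold $(1+\theta-11\epsilon^{1/2})|R'|$ and the constants $11$ and $5$, and for that step you offer only the heuristic that ``the classical counting can be arranged so that its running index touches only $O(1)$ new sums at each step, so passing to $\Gamma$ discards at most $O(\epsilon^{1/2}|R'|)$ sums.'' That is not an argument, and it is in tension with the shape of the result you are trying to prove: if the classical $3k-4$ scheme really survived the deletion of an $\epsilon$-fraction of pairs at only an additive $O(\epsilon^{1/2}|R'|)$ cost, one would expect a doubling threshold near $3|R'|$, whereas the theorem's threshold is the strictly smaller $(1+\theta)|R'|\approx 2.618\,|R'|$; the golden-ratio constant in \cite{SX} is produced by their argument and is not claimed there to be sharp, so your second ``obstacle'' (identifying a pair-of-end-blocks extremal configuration that pins the threshold at $1+\theta$) is likely not even the right target. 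In short: as a citation your proposal matches the paper; as a standalone proof it leaves the theorem's central quantitative step unestablished, by your own admission.
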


With (\ref{eq_small_intersection}), $R' +_\Gamma R'$
is small enough to fit the corresponding assumption to the theorem.
We also have an appropriate lower bound on $\Gamma$.
To see this, note the following claim.

\begin{claim}
There are at most $\epsilon |R'|^2$ pairs in $R' \times R' - \Gamma$,
where $\epsilon = \frac{2C_2}{c C_1^2}$.
\end{claim}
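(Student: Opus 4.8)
The plan is to bound the number of ``bad'' pairs---those in $R' \times R'$ whose sum is \emph{not} robustly represented---by a simple double-counting argument. Recall $\Gamma = \Gamma(R')$ consists of pairs $(a_1,a_2) \in R' \times R'$ whose sum has at least $C_2 n$ representations as a sum of two elements of $R'$, so a pair lies in $R' \times R' - \Gamma$ precisely when its sum $s$ has fewer than $C_2 n$ such representations. Group the bad pairs by their common sum $s$: if we write $r(s) = |\{(b_1,b_2) \in R' \times R' : b_1 + b_2 = s\}|$, then $s$ contributes exactly $r(s)$ bad pairs, and $s$ is the sum of a bad pair only if $r(s) < C_2 n$. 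Hence the number of bad pairs is $\sum_{s : r(s) < C_2 n} r(s) < C_2 n \cdot |\{s : r(s) < C_2 n, \, r(s) \geq 1\}|$.

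First I would observe that every sum $s$ arising from $R' \times R'$ lies in $c\Z \cap [2n]$ (since $R' \subseteq c\Z \cap [n]$), so there are at most $\lfloor 2n/c \rfloor \leq 2n/c$ possible values of $s$. Therefore the number of bad pairs is at most $C_2 n \cdot (2n/c) = \frac{2 C_2}{c} n^2$. Now I would use the assumption $|R'| \geq C_1 n$ from Claim~\ref{claim_large_sets}, which gives $n \leq |R'|/C_1$ and hence $n^2 \leq |R'|^2 / C_1^2$. Substituting, the number of bad pairs is at most $\frac{2 C_2}{c} \cdot \frac{|R'|^2}{C_1^2} = \frac{2 C_2}{c C_1^2} |R'|^2 = \epsilon |R'|^2$, which is exactly the claimed bound.

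There is no serious obstacle here; the argument is a routine counting estimate. The only point requiring a little care is that the bound $|c\Z \cap [2n]| \leq 2n/c$ is what lets us convert ``few representations per sum'' into ``few bad pairs total,'' and then the hypothesis $|R'| \geq C_1 n$ is used to turn the resulting $n^2$ bound into an $|R'|^2$ bound with the stated constant $\epsilon = \frac{2C_2}{cC_1^2}$. One should also note in passing that sums $s$ with $r(s) = 0$ contribute nothing and need not be counted, and that the constant $C_2$ is still free at this stage, so $\epsilon$ can be made as small as desired later by choosing $C_2$ small relative to $C_1$ and $c$ --- this flexibility is what will make the hypotheses of Theorem~\ref{thm_sx} satisfiable in the subsequent step.
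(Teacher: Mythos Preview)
Your proof is correct and follows essentially the same approach as the paper: bound the number of possible sums by $2n/c$ using $R'+R' \subseteq c\Z \cap [2n]$, note each such sum contributes at most $C_2 n$ bad pairs, and then convert the resulting $n^2$ bound to an $|R'|^2$ bound via $|R'| \geq C_1 n$. The paper's version is simply more terse, omitting the explicit representation function $r(s)$ you introduce.
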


\begin{proof}
Since $R' + R'$ contains only multiples of $c$ and lies inside $[2n]$,
$|R' + R'| \leq 2n/c$. By the definition of $\Gamma$,
each of these elements leads to at most $C_2 n$ pairs 
that are not in $\Gamma$.
Therefore, since $|R'| \geq C_1 n$,
\[|R' \times R' - \Gamma| \leq (C_2 n)(2n/c) \leq \frac{2C_2}{c C_1^2}|R'|^2.\]
\end{proof}

By this claim,
\[|\Gamma| = |R' \times R' - \Gamma| \geq |R'|^2 - \epsilon |R'|^2
= (1 - \epsilon)|R'|^2,\]
as required. 

Note that the line of reasoning
from Claim \ref{claim_large_sets} up to this point
is valid for $B'$ as well
(with the same choice for $\epsilon$), 
and that once $C_1$ is fixed (by Claim \ref{claim_large_sets})
this choice of $\epsilon$ can be made arbitrarily small by
decreasing $C_2$.

By Theorem \ref{thm_sx}, we can now say that
$R'$ strongly resembles an
arithmetic progression.
To be precise, we must first introduce a bit of notation:
if $f = o_{\epsilon}(1)$, then 
$f \to 0$ as $\epsilon \to 0$.
Now the strong resemblance $R'$ has to $P$ means
\begin{equation}
|R' \cap P| = (1 - o_\epsilon(1))|R'|.
\end{equation}
With so much information about the coloring
(at least on the multiples of $c$),
we can now find a specific progression 
which contains the desired amount of monochromatic solutions.

\begin{claim}
There exists an arithmetic progression $Q = \{dk \,:\, 1 \leq k \leq \lfloor n/d \rfloor\}$ 
with $|Q \cap A| = (1 - o_\epsilon(1))|Q|$
for some $A \in \{R, B\}$.
\end{claim}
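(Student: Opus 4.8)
The plan is to convert the arithmetic progression $P$ supplied by Theorem~\ref{thm_sx} into the specific ``interval of multiples of $d$'' progression $Q$ demanded by the statement, losing only an $o_\epsilon(1)$ fraction in the process. First I would record what we know: $P$ is an arithmetic progression with $|R'\cap P|\geq (1-\epsilon^{1/2})|R'|$ and $|P|\leq |R'+_\Gamma R'|-(1-5\epsilon^{1/2})|R'|<(2+\epsilon)|R'|-(1-5\epsilon^{1/2})|R'|=(1+6\epsilon^{1/2}+\epsilon)|R'|$, so in particular $|P|=(1+o_\epsilon(1))|R'|$ and hence $|R'\cap P|=(1-o_\epsilon(1))|P|$ as well. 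Write $P=\{u+jd':0\leq j<\ell\}$ for its common difference $d'$ and length $\ell$. Since $R'\subseteq c\Z\cap[n]$ and all but an $o_\epsilon(1)$ fraction of $R'$ lies in $P$, the progression $P$ is ``almost entirely'' made of multiples of $c$ lying in $[n]$.

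Next I would clean up $P$ to make it literally a set of multiples of $c$ contained in $[n]$. The point is that $P$ cannot contain too many non-multiples of $c$: if $\gcd(d',c)=g$, then the multiples of $c$ inside $P$ form a sub-progression of density $g/c$, so $|R'\cap P|\leq |P|\cdot g/c + (\text{elements of }R'\text{ outside }c\Z)=|P|g/c$; combined with $|R'\cap P|=(1-o_\epsilon(1))|P|$ this forces $g=c$, i.e. $c\mid d'$. Similarly, terms of $P$ lying outside $[n]$ contribute nothing to $R'\cap P$, so after discarding them we may assume $P\subseteq c\Z\cap[n]$ with $d'=c d''$ for some integer $d''\geq 1$, still with $|R'\cap P|=(1-o_\epsilon(1))|P|$. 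Now set $d=d''$ and consider $Q=\{dk:1\leq k\leq\lfloor n/d\rfloor\}$ in the scaled-down copy of the problem; more cleanly, rescale by $c$ so that $R'/c$ is a subset of $[\lfloor n/c\rfloor]$ and $P/c$ is an honest arithmetic progression $\{u':0\le j<\ell\}$ of common difference $d''$ inside $[\lfloor n/c\rfloor]$. Comparing $P/c$ to the full progression $Q':=\{d'' k:1\leq k\leq\lfloor n/(cd'')\rfloor\}$ of the same common difference: because $|P/c|=(1+o_\epsilon(1))|R'/c|\leq (1+o_\epsilon(1))|c\Z\cap[n]|/c=(1+o_\epsilon(1))\lfloor n/c\rfloor$ while $Q'$ has $\approx \lfloor n/c\rfloor/d''$ terms, and $R'/c$ is contained in $[\lfloor n/c\rfloor]$, the progression $P/c$ (after translating to start at a multiple of $d''$, absorbing the $O(1)$ boundary loss) is essentially a sub-interval of $Q'$ covering a $(1-o_\epsilon(1))$ fraction of it — otherwise $R'/c$ would have too few elements relative to $|R'/c|=(1-o_\epsilon(1))|P/c|$. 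Hence $|Q'\cap (R'/c)|=(1-o_\epsilon(1))|Q'|$, and unscaling gives $|Q\cap A|=(1-o_\epsilon(1))|Q|$ with $A=R$ (the multiples of $c$ in $R$ lying in $Q$ are exactly $c$ times the elements of $R'/c$ in $Q'$).

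The main obstacle I anticipate is the bookkeeping in the last paragraph: Theorem~\ref{thm_sx} only guarantees that $P$ contains most of $R'$, not that $P$ is balanced within $[n]$ or that it starts/ends near the endpoints of the ambient progression $Q$. One must argue that $P$ cannot be a ``short dense'' progression sitting in the middle with lots of room on either side — but this is exactly ruled out by the upper bound $|P|=(1+o_\epsilon(1))|R'|$ together with the fact that $R'$ occupies a $(C_1,1/c)$-positive fraction of $[n]$, which pins down both the common difference (up to the factor already handled) and the length of $P$ to within $o_\epsilon(1)$ of those of the canonical progression $Q$. A secondary nuisance is that $\lfloor n/c\rfloor$, $\lfloor n/(cd'')\rfloor$, and the starting offset $u'$ each introduce additive $O(1)$ errors; since every relevant quantity is $\Omega(n)$ (as $|R'|\geq C_1 n$), these are swallowed into the $o_\epsilon(1)$, but they should be tracked carefully. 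Once $Q$ is produced, the theorem follows on the next step by counting solutions $a(x+y)=cz$ with $x,y\in Q\cap A$: such pairs give $\Omega(|Q|^2)=\Omega(n^2)$ values of $x+y$, a positive fraction of which are again in the relevant range, each yielding a monochromatic triple.
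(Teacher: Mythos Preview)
Your proposal has a real gap at the core step. You try to force $A=R$ by arguing that $P$ must essentially coincide with $Q$, but this fails in two ways.

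First, the ``translating to start at a multiple of $d''$, absorbing the $O(1)$ boundary loss'' step does not do what you want. The progression $P/c$ lies in a fixed residue class $r\pmod{d''}$; if $r\neq 0$ then $P/c$ and $Q'=\{d''k\}$ are \emph{disjoint}, and moving $P/c$ into the zero class is not a boundary correction but a replacement of every element --- the translated set has no reason to meet $R'/c$. Concretely, take $c=1$ and let $R=R'$ be the odd integers in $[n]$; then $P=R'$ with $d=2$, while $Q=\{2k\}$ satisfies $|Q\cap R|=0$.

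Second, even when $P$ sits in the zero residue class, your assertion that $|P|$ agrees with $|Q|$ to within $o_\epsilon(1)$ is unjustified. You only know $|P|=(1+o_\epsilon(1))|R'|$ with $|R'|\geq C_1 n$, whereas $|Q|=\lfloor n/d\rfloor$ can be genuinely larger: if $R'$ is the multiples of $c$ in $[1,n/2]$, then $d=c$, $|P|\approx n/(2c)$, but $|Q|\approx n/c$, so $|P|/|Q|\approx 1/2$ rather than $1-o_\epsilon(1)$. The sentence ``this is exactly ruled out by the upper bound $|P|=(1+o_\epsilon(1))|R'|$ together with the fact that $R'$ occupies a $(C_1,1/c)$-positive fraction of $[n]$'' does not supply the missing lower bound.

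The paper's argument is shorter and avoids both issues by allowing $A=B$. It keeps $d$ equal to the common difference of $P$ and sets $Q=\{dk\}$. If $P$ already has this form, take $A=R$. Otherwise one may assume the offset $a$ of $P$ satisfies $0<a<d$; then $P$ and $Q$ lie in different residue classes modulo $d$ and are disjoint, so $|Q\cap R'|\leq |R'\setminus P|=o_\epsilon(|R'|)=o_\epsilon(|Q|)$. Since $c\mid d$ (which you do argue correctly), $Q\subseteq c\Z$, whence $|Q\cap B'|=(1-o_\epsilon(1))|Q|$ and $A=B$ works. The idea you are missing is precisely this switch to the complementary color when $P$ lies in a nonzero residue class.
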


\begin{proof}
If $P = \{a + dk\}$ has these properties, 
then we are done, so suppose it does not.
We may assume $0 < a < d$.
Then the progression $Q = \{dk\}$ is 
contained almost entirely in $B'$
(since it's almost entirely disjoint
from $P$ and everything is still a multiple
of $c$).
To be precise, 
$|Q \cap P| = o_\epsilon(1)$,
so $|Q \cap B'| = (1 - o_\epsilon(1))|Q|$.
\end{proof}

Finally, if $x,y \in Q$, then
\begin{equation*}
z = \frac{x + y}{c} = \frac{dk_1 + dk_2}{c}
= d\left(\frac{k_1 + k_2}{c}\right),
\end{equation*}
which is in $Q$ whenever $k_1 + k_2 \in c\Z$,
with at most a constant number of exceptions.
This will happen about $1/c$ of the time, so
\begin{equation*}
\text{\# of monochr. solns. in $Q$}
= \frac{1}{c}(1 - o_\epsilon(1)|Q|)^2 
= \frac{1}{c}(1 - o_\epsilon(1))\left(\frac{n}{d}\right)^2
= \Omega(n^2).
\end{equation*}
Therefore, every equation of the form $ax + ay - cz = 0$
has $\Omega(n^2)$ monochromatic solutions
regardless of how $[n]$ is colored,
proving Theorem \ref{thm_lower_bound}.

\section{Conclusion and New Directions}

We have shown that all $3$-term equations are uncommon over $[n]$.
For any single equation $a_1 x_1 + \cdots + a_k x_k = 0$ over
an abelian group $A$ whose order is relatively prime to each $a_i$,
a full classification is known.
\begin{theorem}[\cite{V}]\label{thm_groups}
An equation is uncommon over $A$ if and only if $k$ is even 
and has no canceling partition.
\end{theorem}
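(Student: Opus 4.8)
The plan is to analyze $\mu_E(A)$ by Fourier analysis on $A$, exploiting that $\gcd(|A|,a_i)=1$ forces each dilation $\gamma\mapsto a_i\gamma$ to permute the character group. Identify a coloring with its red-indicator $f\colon A\to\{0,1\}$ and set $p=\widehat{f}(0)=|R|/|A|$. Expanding the solution count through characters and using $\widehat{(1-f)}(\gamma)=-\widehat{f}(\gamma)$ for $\gamma\neq 0$ yields the exact identity
\[
\mu_E(f)=\left(p^{k}+(1-p)^{k}\right)+\left(1+(-1)^{k}\right)D(f),\qquad
D(f):=\sum_{\gamma\neq 0}\ \prod_{i=1}^{k}\widehat{f}(a_i\gamma).
\]
Since $p^{k}+(1-p)^{k}\ge 2^{1-k}$ by convexity, with equality iff $p=\tfrac12$, the equation is uncommon over $A$ exactly when some (possibly fractional) $f\colon A\to[0,1]$ has $D(f)<0$; the probabilistic method then upgrades such a fractional witness to an actual $\{0,1\}$-coloring. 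When $k$ is odd the second term vanishes identically, so $\mu_E(f)\ge 2^{1-k}$ for every coloring and $E$ is common; this disposes of the ``only if'' direction whenever $k$ is odd.

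Assume from now on $k$ is even, so $\mu_E(f)=p^{k}+(1-p)^{k}+2D(f)$ and everything hinges on the sign of $D(f)$. For the implication \emph{canceling partition $\Rightarrow$ common}, I would use the partition to regroup $\prod_{i=1}^k\widehat{f}(a_i\gamma)$ part by part: the defining cancellation inside each part lets one match coefficients with their negatives, so that the part contributes a factor of the form $|\widehat{G_j}(\gamma)|^{2}$, where $G_j\ge 0$ is a convolution of dilates $x\mapsto f(a_i^{-1}x)$ of $f$. Then every summand of $D(f)$ is nonnegative, hence $D(f)\ge 0$ for all $f$ valued in $[0,1]$ and $\mu_E(f)\ge 2^{1-k}$: common. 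Together with the odd case this gives the full ``only if'' direction.

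For the implication \emph{no canceling partition $\Rightarrow$ uncommon}, I would build an explicit balanced coloring with $D(f)<0$ exactly as in Section~\ref{subsec_fourier}: put $\widehat{f}(0)=\tfrac12$, place small Hermitian weights on a carefully chosen handful of characters $\pm\gamma_{1},\dots$, and set $\widehat{f}\equiv 0$ elsewhere, keeping $\text{Range}(f)\subseteq[0,1]$ via $|f(v)-\tfrac12|\le\sum_{\gamma\neq0}|\widehat{f}(\gamma)|$. The failure of a canceling partition is precisely what supplies a nonzero character $\gamma_0$ for which the multiset $\{a_i\gamma_0\}_{i=1}^{k}$ is not self-paired, so that a single weight produces a term $\prod_{i}\widehat{f}(a_i\gamma_0)$ of strictly negative real part while creating only boundedly many positive terms of controllably smaller magnitude; tuning the weights (as with the $-\tfrac18$, $\tfrac19$ choice in the excerpt) then forces $D(f)<0$.

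The main obstacle is making the combinatorics of ``canceling partition'' match the algebra on both sides: one must show it is exactly the condition under which all the products $\prod_i\widehat{f}(a_i\gamma)$ can be made simultaneously nonnegative — equivalently, that a canceling partition corresponds to a sum-of-squares representation of $D$ — and, dually, that its absence can always be localized to one usable character. Verifying the part-by-part regrouping for arbitrary parts (not merely those that are unions of $\pm$-pairs) is likely to require a Cauchy--Schwarz / SOS identity rather than a bare factorization, while on the constructive side the delicate point is checking both the range constraint on $f$ and that the engineered negative term genuinely dominates the stray positive ones.
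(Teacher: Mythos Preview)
The paper does not prove Theorem~\ref{thm_groups}: it is quoted from~\cite{V} as a known result, and no argument appears in the paper, so there is no paper proof to compare your proposal against.

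That said, a few comments on the proposal itself. Your Fourier identity and the treatment of odd $k$ are correct. For ``canceling partition $\Rightarrow$ common'' you have misread the definition: in this paper a canceling partition is a partition of the coefficients into \emph{pairs} $\{a_i,a_j\}$ with $a_i+a_j=0$, not into arbitrary zero-sum parts. Hence each part contributes exactly $\widehat f(a_i\gamma)\widehat f(-a_i\gamma)=|\widehat f(a_i\gamma)|^2$ (since $f$ is real-valued, $\widehat f$ is Hermitian), so $D(f)\ge 0$ is immediate; no convolutions, no Cauchy--Schwarz, no SOS machinery.

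The substantive direction is ``no canceling partition $\Rightarrow$ uncommon'', and here your sketch is genuinely incomplete. The observation that, over a large cyclic group, the multiset $\{a_i\gamma_0\}$ fails to be closed under negation is fine, but the leap from there to $D(f)<0$ is not justified by analogy with Section~\ref{subsec_fourier}. In the paper's $k=3$ argument, placing Hermitian weights at $\pm a,\pm b$ produces at most a handful of nonzero products $\widehat f(at)\widehat f(bt)$, and one can enumerate them by hand. For general even $k$, placing weights on the characters $\{\pm a_i\gamma_0\}$ creates nonzero contributions $\prod_i\widehat f(a_i\gamma)$ at every $\gamma$ for which all $a_i\gamma$ land in the support, and the number and signs of these terms depend delicately on the multiplicative relations among the $a_i$; the ``$-1/8,1/9$'' trick does not scale. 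You correctly flag this in your final paragraph, but it is not a technicality to be tuned away---it is the heart of the theorem, and your proposal does not supply the construction that handles it.
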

A \textbf{canceling partition} of an equation is 
a partition of the coefficients into pairs
$\{a_i, a_j\}$ such that $a_i + a_j = 0$.
Over $[n]$, we expect the following to be true.
\begin{conjecture}\label{conj_main}
An equation is common over $[n]$ if and only if $k$ is even
and has a canceling partition.
\end{conjecture}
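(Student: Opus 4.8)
The plan is to separate the biconditional into three implications. One of them is almost immediate: if $k$ is even and the equation has no canceling partition, then by Theorem~\ref{thm_groups} it is uncommon over cyclic groups $\Z_m$ of order coprime to every coefficient, so there is a fixed such $m$ with $\mu_E(\Z_m)\le 2^{1-k}-\delta$ for some $\delta>0$; Lemma~\ref{lem_mod_ok} then yields $\limsup_{n\to\infty}\mu_E([n])\le\mu_E(\Z_m)\le 2^{1-k}-\delta$, so the equation is uncommon over $[n]$. The substance of the conjecture therefore lies in the other two implications.

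For ``$k$ odd $\Rightarrow$ uncommon over $[n]$'' I would extend the proof of Theorem~\ref{thm_main_result}. Taking $m$ to be (a divisor of) a coefficient of largest absolute value and working in $\Z_m$ via Lemma~\ref{lem_mod_ok}, the Fourier identity~(\ref{prop_mc_fourier}) carries over to $k$ variables, and the recipe of Section~\ref{upper_bounds}---prescribe a few nonzero Fourier coefficients of $f$ making the deviation negative while keeping $\text{Range}(f)\subseteq[0,1]$ through~(\ref{eq_fourier_triangle})---should dispatch every equation obeying the natural analogues of~(\ref{eq_unique_max})--(\ref{eq_zero_sum}). The equations escaping those hypotheses (those whose reduction modulo $m$ degenerates, e.g.\ with a vanishing proper sub-sum of coefficients, or with leading coefficients too small relative to the others) will again require explicit block colorings of $[n]$; for general odd $k$ this exceptional family is richer than in the $3$-term case, so the bulk of the work here is organizing that case analysis into a manageable form.

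The real obstacle is the converse: \emph{$k$ even with a canceling partition $\Rightarrow$ common over $[n]$}. Using the canceling partition and relabeling within pairs, write the equation in difference form $\sum_{p=1}^{\ell}c_p(x_{2p-1}-x_{2p})=0$ with $\ell=k/2$ and each $c_p>0$. Embed $[n]$ in $\Z_N$ for $N$ large to suppress wraparound, put $g=1_R$ and $h=1_B=1_{[n]}-g$, and let $w^{(c)}$ be the dilate ($w^{(c)}(t)=w(t/c)$ if $c\mid t$, else $0$). Then the number of monochromatic solutions equals $\|g^{(c_1)}*\cdots*g^{(c_\ell)}\|_2^2+\|h^{(c_1)}*\cdots*h^{(c_\ell)}\|_2^2$, that is $\tfrac1N\sum_{\xi}\big(\prod_p|\widehat g(c_p\xi)|^2+\prod_p|\widehat h(c_p\xi)|^2\big)$, while the total number of solutions is $\tfrac1N\sum_\xi\prod_p|\widehat g(c_p\xi)+\widehat h(c_p\xi)|^2$. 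When all $c_p$ are equal this collapses to comparing $\tfrac1N\sum_\xi(|\widehat g(\xi)|^k+|\widehat h(\xi)|^k)$ with $\tfrac1N\sum_\xi|\widehat g(\xi)+\widehat h(\xi)|^k$, and the power-mean bound $|u|^k+|v|^k\ge 2^{1-k}|u+v|^k$ gives commonness frequency by frequency---this is the argument behind Appendix~\ref{xyzw-app}, carried out here for every equal-coefficient canceling partition.

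The difficulty is that for a canceling partition with genuinely distinct $c_p$ the pointwise inequality $\prod_p|u_p|^2+\prod_p|v_p|^2\ge 2^{1-k}\prod_p|u_p+v_p|^2$ fails for general complex $u_p,v_p$, so one must argue globally over $\xi$. My plan would be to split the frequency sum: away from a small neighborhood of $0$ every $\widehat{1_{[n]}}(c_p\xi)$ is small, which forces $\widehat g(c_p\xi)\approx-\widehat h(c_p\xi)$ and hence $\prod_p|\widehat g(c_p\xi)|^2+\prod_p|\widehat h(c_p\xi)|^2\approx 2\prod_p|\widehat g(c_p\xi)|^2$, comfortably dominating $2^{1-k}\prod_p|\widehat{1_{[n]}}(c_p\xi)|^2$; the low-frequency part is where the coloring's structure enters, and I would try to show that any coloring coming within $o(n^{k-1})$ of the random count there must resemble an interval or arithmetic-progression coloring, then finish by a direct computation---or a contradiction---using a robust Freiman-type input in the spirit of Theorem~\ref{thm_sx}. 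Making this low-frequency analysis uniform over all colorings is the crux of the conjecture, and I expect it will require input genuinely beyond the methods of this paper.
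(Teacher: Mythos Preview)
The statement you are attempting is labeled a \emph{conjecture} in the paper, not a theorem; the paper gives no proof and explicitly notes that even the single equation $x+2y=z+2w$ is unresolved. Your submission is therefore not being compared against a proof---there is none---and you yourself concede in the last sentence that the argument is incomplete.

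Of the three implications you separate out, only one is actually established: ``$k$ even with no canceling partition $\Rightarrow$ uncommon over $[n]$,'' via Theorem~\ref{thm_groups} together with Lemma~\ref{lem_mod_ok}. This is exactly the observation the paper makes in the paragraph immediately following Conjecture~\ref{conj_main}. Your further remark that any canceling partition with all $c_p$ equal reduces to the additive-tuple case of Appendix~\ref{xyzw-app} is also correct and matches the paper's only known family of common equations over $[n]$.

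The remaining two implications are genuinely open, and your outline does not close them. For ``$k$ odd $\Rightarrow$ uncommon,'' your plan to mimic Section~\ref{subsec_fourier} is reasonable in spirit, but the exceptional set of equations violating the analogues of~(\ref{eq_unique_max})--(\ref{eq_zero_sum}) proliferates with $k$, and you offer no mechanism for handling it; even for $k=3$ the paper needed several ad hoc constructions. For ``canceling partition with distinct $c_p$ $\Rightarrow$ common,'' you correctly observe that the pointwise inequality $\prod_p|u_p|^2+\prod_p|v_p|^2\ge 2^{1-k}\prod_p|u_p+v_p|^2$ fails, and your proposed high/low-frequency split is a natural first move---but the low-frequency step is precisely the missing idea, and there is no evidence that Freiman-type structure in the style of Theorem~\ref{thm_sx} suffices. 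In short, your decomposition of the problem agrees with the paper's own account of what is and is not known, but the proposal remains a research plan rather than a proof, because the conjecture is open.
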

Note that if this were true,
equations with $k$ even would behave the same over $A$ and $[n]$,
while equations with $k$ odd would behave differently.
With this paper, the conjecture is now confirmed for $k = 3$.
Much is still unknown, but we can also definitively say that 
equations with $k$ even and no canceling partition are uncommon over $[n]$.
This is simply because they are uncommon over $\Z_p$ if $p$
is a large enough prime ($p > \max\{|a_i|\}$) by Theorem \ref{thm_groups},
and Lemma \ref{lem_mod_ok} implies they are also uncommon over $[n]$.
Appendix \ref{xyzw-app} provides a proof for the only type of equation
known to be common over $[n]$:
\begin{equation}
x_1 + \cdots + x_{k/2} = x_{k/2+1} + \cdots + x_{k}
\;\; (k \text{ even}).
\end{equation}
For instance, it is not even known if $x + 2y = z + 2w$
is common.

Aside from these types of classification problems
(and ones which address systems of equations as in \cite{KLN}), 
improving upper and lower bounds on minima remains widely open.
Furthermore, all these questions and more can be asked about
colorings of more than 2 colors.

\section*{Acknowledgments}

We would like to thank Zhanar Berikkyzy 
for many enlightening discussions and helpful 
suggestions throughout our work on this paper.
We would also like to thank an anonymous reviewer
whose suggestion to use Fourier-analytic techniques
greatly streamlined some of our arguments
and was instrumental in the writing of 
Section \ref{subsec_fourier}.

\bibliographystyle{amsplain}
\bibliography{3-term_eqs}
%\bibliography{in_doc}

\appendix

\section{Monochromatic Solutions
for Additive Tuples}\label{xyzw-app}

Fix $k \in \N$ even.
Here we prove all \textit{additive tuples}, equations of the form
\begin{equation}\label{eq_additive_tuple}
x_1 + \cdots + x_{k/2} = x_{k/2 + 1} + \cdots + x_k,
\end{equation}
are common over $[n]$,
i.e. the minimum fraction of monochromatic solutions
the same as what is expected from uniformly random colorings: $2^{1-k}$.
Let $p$ be a prime which is larger than $kn/2$.  
We identify $[n]$ with the subset $S = \{1, 2, \dots, n\} \subseteq \Z_p$
and note that by our choice of $p$ any solution to (\ref{eq_additive_tuple})
over $S$ is also a solution over the integers.  

Let $\mathds{1}_S$ be the indicator function of $S$, and 
recall the definition of the Fourier transform from 
Section \ref{subsec_fourier}: 
$$\widehat{ \mathds{1}_S}(\xi) 
= \frac{1}{p}\sum_{t \in \Z_p} \mathds{1}_S(t) e^{-2 \pi i \xi t/p}. $$

It is standard (see, for example, Equation (4.14) in \cite{TV}) that the number of solutions to (\ref{eq_additive_tuple}) in $S$ is given by 
$$p^{k-1}\sum_{t \in \Z_p}  \left| \widehat{ \mathds{1}_S}(t) \right|^{k}.$$

Now suppose we have a partition of $S$ into a red set $R$ and a blue set $B$.  
The total number of monochromatic solutions to 
$x_1 + \cdots + x_{k/2} = x_{k/2+1} + \cdots + x_{k}$ is then given by 
$$p^{k-1}\sum_{j \in \Z_p} \left| \widehat{ \mathds{1}_R}(j) \right|^{k} 
+ p^{k-1}\sum_{j \in \Z_p} \left| \widehat{ \mathds{1}_B}(j) \right|^{k}. $$
Using the inequality $x^{k} + y^{k} \geq 2^{1 - k} (x+y)^{k}$, 
which is valid for all real $x, y$ and $k \geq 2$ even
(by Jensen's inequality), we get that the number of monochromatic solutions is at least 

\begin{eqnarray*}
(p/2)^{k-1} \sum_{j \in \Z_p} 
\left( |\widehat{\mathds{1}_{R}}(j)| + | \widehat{\mathds{1}_{B}}(j)|\right)^k 
&\geq& 
(p/2)^{k-1} \sum_{j \in \Z_p} 
\left|  \widehat{\mathds{1}_{R}}(j) + \widehat{\mathds{1}_{B}}(j) \right|^k \\
&=& 
2^{1 - k} \left(p^{k-1}\sum_{j \in \Z_p} \left| \widehat{ \mathds{1}_S}(j) \right|^k\right).
\end{eqnarray*}
In other words, the number of monochromatic solutions is always at least a $2^{1 - k}$ fraction of the total number of solutions.  

\section{Computations}\label{app_computations}

Below are detailed calculations for the equations remaining
after using the Fourier-analytic techniques, 
Proposition \ref{prop_high_gcd},
and past results \cite{RZ, BCG, TW}.

\subsection{$2x - 2y + cz = 0$}

Recall the colorings $f : [n] \to \{-1, 1\}$ used for these equations:
\[f(t) = \begin{cases}
-1, & t \text{ even, } t \leq \alpha n, \\
1, & \text{otherwise},
\end{cases}
\qquad \text{where} \qquad
\alpha = \begin{cases}
3/4, & c = 1, \\
2/c, & c \geq 3
\end{cases}\]
(note that $c$ is odd because our equations
are fully reduced).
We address the case when $c \geq 3$ and $c = 1$ separately.

Let $c \geq 3$. Since in any solution $z$ is even,
this coloring
forces $z$ to be blue:
if $z \in [2n/c,n]$,
then 
$$2(y - x) = cz \geq c(2n/c) = 2n,$$
so
$y - x \geq n$,
but this is not possible.
Therefore, all monochromatic 
solutions are blue, and in particular
$x,y,z \in [1,2n/c]$.
There are $2n^2/c^2 + O(n)$ ways to
choose two numbers $x$ and $y$ in $[1, 2n/c]$
(note $y > x$ is required for a valid 
solution to the equation).
Only $1/4 + O(n^{-1})$ of the pairs $(x,y)$ 
are blue\footnote{The $O(n^{-1})$ error term 
here is due to edge effects from the boundaries 
of the regions; 
the total number of pairs involved in such effects 
is $O(n)$.
We use facts similar to this several more times
throughout this Appendix.}.
Furthermore, $2(y - x)$ must be divisible by $c$,
and only $1/c + O(n^{-1})$ of the pairs $(x,y)$ 
meet that requirement.
Finally, once $x$ and $y$ are chosen,
$z$ is determined,
and $z$ is always in $[1,2n/c]$:
$z = 2(y - x)/c < 2n/c$.
Therefore,
there are
$$\frac{n^2}{2c^3} + O(n)$$
monochromatic solutions.
The total number of solutions is $n^2/2c + O(n)$:
there are $\binom{n}{2}$ ways to
choose two numbers in $[n]$ and set them as $x$ and $y$,
and $1/c + O(n^{-1})$ of these pairs will have
$z = 2(y - x) / c \in \Z$
(and $z$ will always be in $[n]$).
This gives us
$$\mu_{\{2x - 2y + cz = 0\}}(f) 
\leq \frac{n^2/2c^3 + O(n)}{n^2/2c + O(n)}
	= \frac{1}{c^2} + o(1)
	= \frac{1}{4} - \Omega(1)
\quad \text{ (for $c \geq 3$).}$$

Now let $c = 1$. We will use the fact that for any solution
$z$ must be even
and break the counting into two cases:
(a) $z \in [1,3n/4]$ (blue solutions) and 
(b) $z \in [3n/4,n]$ (red solutions).
To count the number of monochromatic solutions,
it helps to visualize solutions on
an $n \times n$ grid.
For our purposes here
the horizontal axis will 
represent the $x$ values,
and the vertical axis will represent 
the $y$ values.
Once $x$ and $y$ are chosen,
$z = 2(y - x)$ is determined,
and valid solutions $(x, y, 2(y - x))$ in $[n]^3$
will lie within a certain area on the grid.
Figure \ref{area-calcs-fig} is provided as a visual aid 
for the following computations.

\begin{figure}[hbt!]
\centering
\begin{subfigure}{0.48\textwidth}
\centering
\begin{tikzpicture}[scale=0.6]
\fill[gray!25] (0,0) -- (0,3.75) --
	(6.25,10) -- (10,10) -- cycle;
	
\fill[gray!75] (0,0) -- (0,3.75) --
	(3.75,7.5) -- (7.5,7.5) -- cycle;

\draw[thick] (0,0) -- (10,0) -- (10,10) 
	-- (0,10) -- (0,0);

\draw[dashed] (7.5,-0) -- (7.5,10);
\draw[dashed] (-0,7.5) -- (10,7.5);

\draw (0,0) -- (10,10);
\draw (0,3.75) -- (6.25,10);

\draw[blue, ultra thick] (0,0) -- (7.5,0);
\draw[red, ultra thick, dotted] (0,0) -- (7.42,0);
\draw[blue, ultra thick] (0,0) -- (0,7.5);
\draw[red, ultra thick, dotted] (0,0) -- (0,7.42);
\draw[red, ultra thick] (7.5,0) -- (10,0);
\draw[red, ultra thick] (0,7.5) -- (0,10);

\draw (0,0) node[anchor=north east]{1};
\draw (10,0) node[anchor=north west]{$n$};
\draw (0,10) node[anchor=south east]{$n$};
\draw (0,7.5) node[anchor=east]{$\frac{3n}{4}$};
\draw (7.5,0) node[anchor=north]{$\frac{3n}{4}$};

\draw (3,5) node{$\approx \frac{27n^2}{128}$};
\end{tikzpicture}
\caption{$z \in [1,3n/4]$:
The gray areas combined represent all  
pairs $(x,y)$ with
$1 \leq z = 2(y - x) \leq 3n/4$.
Since here we are counting blue solutions,
$x,y \in [1,3n/4]$, as well,
i.e. we only consider the dark gray area.
The area of the dark gray trapezoid must
be multiplied by $1/4$, since only about $1/4$
of the pairs $(x,y)$ in that region are blue.\\}
\end{subfigure}
\hfill
\begin{subfigure}{0.48\textwidth}
\centering
\begin{tikzpicture}[scale=0.6]
\fill[gray!25] (0,5) -- (0,3.75) --
	(6.25,10) -- (5,10) -- cycle;
	
\fill[gray!75] (0,5) -- (0,3.75) --
	(3.75,7.5) -- (2.5,7.5) -- cycle;

\draw[thick] (0,0) -- (10,0) -- (10,10) 
	-- (0,10) -- (0,0);

\draw[dashed] (7.5,-0) -- (7.5,10);
\draw[dashed] (-0,7.5) -- (10,7.5);

\draw (0,5) -- (5,10);
\draw (0,3.75) -- (6.25,10);

\draw[blue, ultra thick] (0,0) -- (7.5,0);
\draw[red, ultra thick, dotted] (0,0) -- (7.42,0);
\draw[blue, ultra thick] (0,0) -- (0,7.5);
\draw[red, ultra thick, dotted] (0,0) -- (0,7.42);
\draw[red, ultra thick] (7.5,0) -- (10,0);
\draw[red, ultra thick] (0,7.5) -- (0,10);

\draw (0,0) node[anchor=north east]{1};
\draw (10,0) node[anchor=north west]{$n$};
\draw (0,10) node[anchor=south east]{$n$};
\draw (0,7.5) node[anchor=east]{$\frac{3n}{4}$};
\draw (7.5,0) node[anchor=north]{$\frac{3n}{4}$};

\draw (2.75,5.5) node{$\approx \frac{5n^2}{128}$};
\draw (3,9) node{$\frac{n^2}{32} \approx$};
\end{tikzpicture}
\caption{$z \in [3n/4, n]$:
The gray areas combined represent all 
pairs $(x,y)$ 
with $3n/4 \leq z = 2(y - x) \leq n$.
Since we are counting red solutions,
the dark gray area must be multiplied by $1/4$,
because only about $1/4$ of the pairs $(x,y)$ in that
trapezoid are red,
and the light gray area must be multiplied by
$1/2$, because only about half of the $x$ values there are
red (the $y$ values in the light gray area
are all red).}
\end{subfigure}
\caption{Two depictions of the $n \times n$
grid in the $xy$-plane.}
\label{area-calcs-fig}
\end{figure}

(a) For a blue solution, we must have $x,y,z \in [1,3n/4]$.
There are $27n^2/128 + O(n)$ valid 
choices for $x$ and $y$ in $[1,3n/4]$
that also lead to $z \in [1,3n/4]$.
Note, however, that only $1/4 + O(n^{-1})$ of the pairs
$(x,y)$ will be blue.
Therefore, there are 
$$\frac{27}{512}n^2 + O(n)$$
blue solutions.

(b) For a red solution, note that since $z$ must
be even, $z \in [3n/4,n]$.
For valid $x$ and $y$, there are two 
possible cases here: (i) $x \in [1,3n/4]$
and $y \in [3n/4,n]$, or (ii) $x,y \in [1,3n/4]$.
In (i) there are $n^2/32 + O(n)$
valid choices for $x$ and $y$, 
but only $1/2 + O(n^{-1})$ of the $x$ will be red.
Therefore, the contribution from (i) is
$$\frac{1}{64}n^2 + O(n).$$
In (ii) there are $5n^2/128 + O(n)$
solutions, but only $1/4 + O(n^{-1})$ of the $(x,y)$ 
will be red,
so the contribution from (ii) is
$$\frac{5}{512}n^2 + O(n).$$
Adding up all the blue solutions 
and all the red solutions, we get
$$\left(\frac{27}{512} + \frac{1}{64} + \frac{5}{512}
\right)n^2 + O(n) = \frac{5}{64}n^2 + O(n)$$
monochromatic solutions.

The total number of solutions is $3n^2/8 + O(n)$,
because for a solution we must have
$$1 \leq z = 2(y - x) \leq n,$$
or $0 < y - x \leq n/2$,
and there are $3n^2/8 + O(n)$ pairs $(x,y)$
which satisfy this.
Therefore, 
$$\mu_{\{2x - 2y + z = 0\}}([n]) \leq \frac{5}{24} + o(1) = \frac{1}{4} - \Omega(1),$$
i.e. $2x - 2y + z = 0$ is uncommon over $[n]$.

\subsection{$2x - y + 2z = 0$}

We will now cover a general technique 
to show an individual equation is uncommon,
and then we will use it on the equation
$2x - y + 2z = 0$.
Fix the equation $ax + by + cz = 0$,
and let $f : [n] \to \{-1,1\}$ be a coloring.
Consider the value
\begin{equation}\label{orig-counting}
L = \sum_{ai + bj + ck = 0}
f(i) f(j) + f(i) f(k) + f(j) f(k).
\end{equation}
Here and elsewhere in this section, 
the variables $i,j,k$ are implicitly
assumed to lie in $[n]$.
$L$, in a sense, indirectly counts the number
of monochromatic solutions:
by direct computation,
each summand is $3$ if $i,j,k$ are monochromatic
and is $-1$ otherwise,
so
$$L = 3(\text{\# monochr. solutions}) 
	- (\text{\# non-monochr. solutions}).$$
With a straightforward manipulation, we get
\begin{equation}\label{N-negative-eq}
\text{\# monochr. solutions}
	= \frac{1}{4}(\text{\# total solutions})
	+ \frac{L}{4},
\end{equation}
which means that to show $ax + by + cz = 0$
is uncommon,
we only need to exhibit a family of colorings
with $L = C n^2 + O(n)$
for some $C < 0$.

Our next task is to find a way to actually compute $L$.
For $i < j$, let $N(i,j)$ denote the number of times
a solution contains $i$ and $j$ as two of the three
values for $x,y,z$ (in no particular order).
Then we can rewrite (\ref{orig-counting}) as
\begin{equation}\label{v2-counting}
L = \sum_{i < j} N(i,j)f(i) f(j) + O(n).
\end{equation}
Note the $O(n)$ term accounts for 
the possibility of solutions with $i = j$.
We can view $N(i,j)$ as the sum of 
six indicator-like functions, 
each corresponding to where there exists 
a solution with $(i,j)$ playing the role 
of some ordered pair from $\{x,y,z\}$.
We will examine the total contribution of each of these 
functions separately in computing $L$,
using areas in an $n \times n$ grid to aid the calculations.

The coloring
\begin{center}
\begin{tikzpicture}
\draw[ultra thick, blue] (0,0) -- (1.25,0);
\draw[ultra thick, red] (1.25,0) -- (5, 0);
\draw[ultra thick, blue] (5,0) -- (10,0);
\draw[thick] (0,0.2) -- (0,-0.2);
\draw (0,-0.2) node[anchor = north]{1};
\draw[thick] (1.25,0.2) -- (1.25,-0.2);
\draw (1.25,-0.2) node[anchor = north]{$n/8$};
\draw[thick] (5,0.2) -- (5,-0.2);
\draw (5,-0.2) node[anchor = north]{$n/2$};
\draw[thick] (10,0.2) -- (10,-0.2);
\draw (10,-0.2) node[anchor = north]{$n$};
\end{tikzpicture}
\end{center}
will be enough for our purposes\footnote{
This coloring was obtained by first running a basic version
of the local optimization algorithm 
described in \cite{BCG} for $n = 1000$.
We then simplified the coloring by hand
and blew it up to
an arbitrary $n$.
The hand-manipulation did increase the
number of monochromatic solutions slightly,
but it greatly simplified the following calculations.}.
To compute $L$,
the cases to consider are
\begin{enumerate}
\item $(i,j)$ plays the role of $(x,z)$: $2i - y + 2j = 0$;
restriction: $1 \leq 2i + 2j \leq n$.
\item $(i,j)$ plays the role of $(z,x)$: $2j - y + 2i = 0$;
restriction: $1 \leq 2j + 2i \leq n$.
\item $(i,j)$ plays the role of $(x,y)$: $2i - j + 2z = 0$;
restrictions: $2 \leq j - 2i \leq 2n$, $j$ even.
\item $(i,j)$ plays the role of $(y,x)$: $2j - i + 2z = 0$;
restrictions: $2 \leq i - 2j \leq 2n$, $i$ even.
\item $(i,j)$ plays the role of $(y,z)$: $2x - i + 2j = 0$;
restrictions: $2 \leq i - 2j \leq 2n$, $i$ even.
\item $(i,j)$ plays the role of $(z,y)$: $2x - j + 2i = 0$;
restrictions: $2 \leq j - 2i \leq 2n$, $j$ even.
\end{enumerate}
Note in each of these six cases one of the bounds
holds trivially.

Let us explore Case 1.
We start by defining an ``indicator'' of sorts,
which will help us rewrite (\ref{v2-counting}):
$$I_1(i,j) = 
\begin{cases}
f(i)f(j), & 1 \leq 2i + 2j \leq n, \\
0, & \text{otherwise}.
\end{cases}
$$
Aggregating, we define $L_1 = \sum_{i < j}I_1(i,j)$,
which simply counts up Case 1's 
contribution to (\ref{v2-counting}).
Note Case 2 is identical to Case 1.

We can approach Case 3 in a similar manner,
but there is an additional twist.
If we define
$$I_3(i,j) = 
\begin{cases}
f(i)f(j), & 2 \leq j - 2i \leq 2n, \\
0, & \text{otherwise},
\end{cases}
$$
then $L_3 = \sum_{i<j}I_3(i,j)$
includes the contributions from 
both even and odd $j$,
so the contribution from Case 3 is
actually $\frac{1}{2}L_3 + O(n)$.
The rest of the $L_r$ are defined similarly.

Each $L_r$ can be computed by considering the 
pairs $(i,j)$ in Case $r$ with $i < j$
and subtracting 
the number of dichromatic pairs from the number
of monochromatic pairs.
Similar to (yet distinct from)
the counting technique 
implemented for the equation $2x - 2y + z = 0$,
to compute a given $L_r$
we can consider areas within an $n \times n$
grid, as seen in Figure \ref{221-visual},
now with $i$ represented on the horizontal axis
and $j$ on the vertical axis.
\begin{figure}[hbt!]
\centering
\begin{subfigure}{0.48\textwidth}
\centering
\begin{tikzpicture}[scale=0.64]
\fill[gray!50] (0,0) -- (0,5) -- (5,0) -- cycle;

\fill[blue] (0,0) -- (0,1.25)
 -- (1.25,1.25) -- (1.25,0) -- cycle;

\fill[red] (1.25,1.25) -- (1.25,3.75)
	-- (3.75,1.25) -- cycle;

\draw[thick] (0,0) -- (10,0) -- (10,10) 
	-- (0,10) -- (0,0);
	
%\draw (0,0) -- (5,10);
%\draw (0,0) -- (10,5);
\draw (0,5) -- (5,0);

\draw[dashed] (1.25,-0) -- (1.25,10);
\draw[dashed] (5,-0) -- (5,10);
\draw[dashed] (-0,1.25) -- (10,1.25);
\draw[dashed] (-0,5) -- (10,5);

\draw (1.25,0) node[anchor=north]{$\frac{n}{8}$};
\draw (5,0) node[anchor=north]{$\frac{n}{2}$};
\draw (0,1.25) node[anchor=east]{$\frac{n}{8}$};
\draw (0,5) node[anchor=east]{$\frac{n}{2}$};
\draw (0,0) node[anchor=north east]{1};
\draw (10,0) node[anchor=north]{$n$};
\draw (0,10) node[anchor=east]{$n$};

\fill[white, opacity=0.9] (0,0) -- (10,10) 
	-- (10,0) -- cycle;
\draw[gray] (0,0) -- (10,10);

\draw[blue, ultra thick] (0,0) -- (1.25,0);
\draw[blue, ultra thick] (0,0) -- (0,1.25);
\draw[red, ultra thick] (1.25,0) -- (5,0); 
\draw[red, ultra thick] (0,1.25) -- (0,5); 
\draw[blue, ultra thick] (5,0) -- (10,0);
\draw[blue, ultra thick] (0,5) -- (0,10);
\end{tikzpicture}
\caption{Visual for $L_1$}
\end{subfigure}
\hfill
\begin{subfigure}{0.48\textwidth}
\centering
\begin{tikzpicture}[scale=0.64]
%\fill[gray!50] (0,0) -- (10,0) -- (10,5) -- cycle;

\fill[gray!50] (0,0) -- (0,10) -- (5,10) -- cycle;

%\fill[blue] (0,0) -- (1.25,0.625)
% -- (1.25,0) -- cycle;
 
\fill[blue] (0,0) -- (0.625,1.25)
 -- (0,1.25) -- cycle;
 
%\fill[red] (2.5, 1.25) -- (5,2.5) -- (5,1.25) -- cycle;

\fill[red] (1.25, 2.5) -- (2.5,5) -- (1.25,5) -- cycle;

%\fill[blue] (5,0) -- (5,1.25) -- (10,1.25)
%	-- (10,0) -- cycle;
	
\fill[blue] (0,5) -- (1.25,5) -- (1.25,10)
	-- (0,10) -- cycle;

\draw[thick] (0,0) -- (10,0) -- (10,10) 
	-- (0,10) -- (0,0);
	
%\draw (0,0) -- (5,10);
%\draw (0,0) -- (10,5);
\draw (0,0) -- (5,10);
%\draw (0,5) -- (5,0);

\draw[dashed] (1.25,-0) -- (1.25,10);
\draw[dashed] (5,-0) -- (5,10);
\draw[dashed] (-0,1.25) -- (10,1.25);
\draw[dashed] (-0,5) -- (10,5);

\fill[white, opacity=0.9] (0,0) -- (10,10) 
	-- (10,0) -- cycle;
\draw[gray] (0,0) -- (10,10);
	
\draw (1.25,0) node[anchor=north]{$\frac{n}{8}$};
\draw (5,0) node[anchor=north]{$\frac{n}{2}$};
\draw (0,1.25) node[anchor=east]{$\frac{n}{8}$};
\draw (0,5) node[anchor=east]{$\frac{n}{2}$};
\draw (0,0) node[anchor=north east]{1};
\draw (10,0) node[anchor=north]{$n$};
\draw (0,10) node[anchor=east]{$n$};

\draw[blue, ultra thick] (0,0) -- (1.25,0);
\draw[blue, ultra thick] (0,0) -- (0,1.25);
\draw[red, ultra thick] (1.25,0) -- (5,0); 
\draw[red, ultra thick] (0,1.25) -- (0,5); 
\draw[blue, ultra thick] (5,0) -- (10,0);
\draw[blue, ultra thick] (0,5) -- (0,10);
\end{tikzpicture}
\caption{Visual for $L_3$}
\end{subfigure}
\caption{In each case, the contributions
to $L$ are computed by subtracting the 
gray area (dichromatic pairs)
from the red/blue area (monochromatic pairs).
The lighter regions represent $i \geq j$
and are not a part of $L$.}
\label{221-visual}
\end{figure}

Case 2 is identical to 1, Case 5 
is identical to 3, and Cases 4 and 6
lie completely in $i \geq j$ and therefore will
not contribute to $L$.
This allows us to simplify the calculation:
\begin{equation}\label{L-221-eq}
L = L_1 + L_2 + \frac{1}{2}(L_3 + L_5) + O(n)
= 2 L_1 + L_3 + O(n) = -\frac{15}{128}n^2 + O(n).
\end{equation}
The coefficient of $n^2$ is negative, so
by (\ref{N-negative-eq})
this coloring gives (asymptotically)
fewer monochromatic solutions
than what is expected from uniformly 
random colorings, i.e. $2x - y + 2z = 0$ is uncommon over 
$[n]$.

\end{document}